\newtheorem{Thm}{Theorem}
\newtheorem{Lemma}{Lemma}
\newtheorem{cor}{Corollary}
\newtheorem{Proposition}{Proposition}
\title[Small doubling in ordered groups]{Small doubling in ordered groups:\\ generators and structure}
\author[Freiman, Herzog, Longobardi, Maj, Plagne \and Stanchescu ]
{G. A. Freiman, M. Herzog, P. Longobardi,\\ M. Maj, A. Plagne \and Y.V. Stanchescu}
\address{Gregory A. Freiman, Marcel Herzog
\newline 
\indent School of Mathematical Sciences, Tel Aviv University,
Tel Aviv 69978, Israel}
\email{~grisha@post.tau.ac.il}
\email{~herzogm@post.tau.ac.il}
\address{Patrizia Longobardi, Mercede Maj
\newline
\indent Dipartimento di Matematica, Universita' di Salerno, Via Giovanni Paolo II, 132, 84084 Fisciano (Salerno), Italy}
\email{~plongobardi@unisa.it}
\email{~mmaj@unisa.it}
\address{Alain Plagne
\newline
\indent Centre de Math\'ematiques Laurent Schwartz, \'Ecole polytechnique, 91128 Palaiseau Cedex,France}
\email{~plagne@math.polytechnique.fr}
\address{Yonutz V. Stanchescu
\newline
\indent Afeka Academic College, Tel Aviv 69107, Israel
\newline 
and
\newline
\indent The Open University of Israel, Raanana 43107, Israel}
\email{~yonis@afeka.ac.il {\rm \it and} ~ionut@openu.ac.il}
\begin{document}

\begin{abstract}
We prove several new results on the structure of the subgroup generated by  
a small doubling subset 
of an ordered group, abelian or not. We obtain precise results generalizing 
Freiman's $3k-3$ and $3k-2$ 
theorems in the integers and several further generalizations.
\end{abstract}

\maketitle


\section{Introduction}

Let $G$ denote an arbitrary group (multiplicatively written). If $S$ is a 
subset of $G$, we define its {\em square} $S^2$ by
$$
S^2=\{x_1x_2\mid x_1,x_2\in S\}.
$$
In the abelian context, $G$ will usually be additively written and we 
shall rather speak of the {\em double} of $S$.

In this paper, we are concerned with the following general problem. 
Suppose that we are given 
two real numbers $\alpha \geq 1$ and $\beta$. While in general, the size 
of $S^2$ should typically be quadratic in the one of $S$, we would like 
to identify those finite sets $S$ having the property 
that
\begin{equation}
\label{star}
|S^2| \leq \alpha|S| + \beta.
\end{equation}
Typically, we may need to add the natural assumption that $| S |$ is 
not too small. It is clear that the difficulty 
in describing $S$ in this problem increases with $\alpha$, 
which is related with the {\em doubling constant} of $S$ defined 
as $| S^2 |/ |S|$. When $| S |$ is large, solving \eqref{star} is 
tantamount to asking for a complete description of sets with a 
bounded doubling constant.

Problems of this kind are called {\em inverse problems} in additive number theory. 
During the last two decades, they became the most central issue in a 
fast growing area, known as additive combinatorics.
Inverse problems of small doubling type have been first investigated 
by G.A. Freiman very precisely in the additive group of the integers 
(see \cite{F1}, \cite{F2}, \cite{F3}, \cite{F4}) and by many other 
authors in general abelian groups, starting with M. Kneser  \cite{K}
(see, for example, \cite{Ham}, \cite{Bilu}, \cite{LS}, \cite{Ru}, \cite{GR}).  
More recently, small doubling problems in non-necessarily abelian groups 
have been also studied, see \cite{G},  \cite{Sa} and \cite{BGT} for 
recent surveys on these problems and  \cite{NA} and \cite{TV} 
for two important books on the subject.

There are two main types of questions one may ask. First, find the general 
type of structure that $S$ can have and how this type of structure behaves 
when $\alpha$ increases. In this case, very powerful general results 
have been obtained (leading to a qualitatively complete structure theorems 
thanks, notably, to the concepts of nilprogressions and approximate groups), 
but they are not very precise quantitatively. 
Second, for a given (in general quite small) range of values for $\alpha$, 
find the precise (and possibly complete) 
description of those sets $S$ which satisfy \eqref{star}. The archetypical 
results in this area are Freiman's $3k-4$, $3k-3$ or $3k-2$ theorems 
in the integers (see \cite{F1}, \cite{F2} and \cite{F3}). See also for 
instance \cite{LS}, \cite{DHP}, \cite{HP}, \cite{HP2}, \cite{S2} or \cite{S3} 
for other results of this type. In this paper, we investigate problems 
of the second type.

Here, for a given non-necessarily abelian group $G$, we would like 
to understand precisely what happens in the case when $\alpha =3$. 
We restrict ourselves to the already quite intricate case of 
{\em ordered groups} started in papers \cite{FHLM}, \cite{FHLMS1} 
and \cite{FHLMS2}.

We recall that if $G$ is a group and $\leq$  is a total order relation defined
on the set $G$, we say that $(G, \leq)$ is {\em an ordered group} if 
for all $a,b,x,y\in G$, the inequality $a\leq b$ implies that $xay\leq xby$. 
A group $G$ is {\em orderable} if there exists an order $\leq$ on $G$ 
such that $(G, \leq)$ is an ordered group.

Obviously the group of  integers with the usual ordering is an ordered group. 
More generally, the class of orderable groups contains all nilpotent 
torsion-free groups (see, for example, \cite{MA} or \cite{N}). We will 
come back to nilpotent groups later in this paper.

In \cite{FHLM}, some of us proved that if  
$S$ is a finite subset of an orderable group satisfying
$|S| \geq 3$ and $|S^2| \leq 3|S| - 3$,
then $\langle S \rangle$, the subgroup generated by $S$, is abelian. 
Moreover, if $|S| \geq 3$ and 
$|S^2|  \leq 3| S | -4$, then
$S$ is a subset of a short commutative {\em geometric progression}, 
that is, a set of the form
$$
P_l (u,t) = \{ u, ut,...., ut^{l-1} \} \subseteq \langle u,t \rangle
$$
for some {\em commuting} elements $u$ and $t$ of $G$ and $l$ an integer ($t$ will
be called the {\em ratio} of the progression and $l$ its {\em length}).
In particular, $\langle S \rangle$ is abelian and at most {\em 2-generated}
(by which we mean that it is generated by a set having at most $2$ elements). 
Finally, a group was constructed such that for any integer $k \geq 3$ there exists 
a subset $S$ of cardinality $k$ such that 
$|S^2| = 3|S|-2$ and $\langle S \rangle$ is non-abelian.

In this paper, we continue to study the structure of finite subsets $S$ 
of an orderable group when an inequality of the form \eqref{star} 
with $\alpha=3$ holds and improve drastically our preceding results, 
both by extending their range of application and by making them more 
precise. We particularly concentrate on the size of a generating set 
and the structure of $\langle S \rangle$, as will be explained in 
the following section.


\section{New results and the plan of this paper}

The present paper is organized as follows.

In Section \ref{sec2}, we consider the case when $\langle S \rangle $ is abelian.
We first obtain a $(3k-3)$-type theorem generalizing Freiman's classical theorem 
in the integers.

\begin{Thm}[Ordered $3k-3$ Theorem]
\label{3k-3}
Let $G$ be an orderable group and $S$ be a finite subset of $G$ satisfying
$|S^2| \leq 3 |S|-3$. Then $\langle S \rangle$ is abelian
and at most $3$-generated. 

Moreover, 
if $|S| \geq 11$, then one of the following two possibilities occurs:
\begin{itemize}
\item[(i)] $S$ is a subset of a geometric progression of length at most $2|S|-1$,
\item[(ii)] $S$ is the union of two geometric progressions with the same ratio, 
such that the sum of their lengths is equal to $|S|$.
\end{itemize}
\end{Thm}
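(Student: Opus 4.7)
The plan is to reduce the problem to the integer-lattice setting, first deriving that $\langle S\rangle$ is a free abelian group of small rank, then importing the known one- and two-dimensional structure theorems for small doubling.

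For the first step, the main result of \cite{FHLM} already gives, under the hypothesis $|S^2|\leq 3|S|-3$, that $\langle S\rangle$ is abelian; since the ambient orderable group is torsion-free and $\langle S\rangle$ is finitely generated, one has $\langle S\rangle\cong \mathbb{Z}^r$ for some $r\geq 0$. I then switch to additive notation. Letting $d$ be the essential (affine) dimension of $S$ inside $\mathbb{Z}^r$, the observation that $S-s_0$ lies in a rank-$d$ subgroup (for any $s_0\in S$) shows $r\leq d+1$. Freiman's dimension lemma,
\[
|S+S|\;\geq\;(d+1)|S|-\binom{d+1}{2},
\]
combined with the hypothesis, yields $(d-2)|S|\leq \binom{d+1}{2}-3$; together with the trivial bound $|S|\geq d+1$, this forces $d\leq 2$, hence $r\leq 3$. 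This establishes the first assertion of the theorem.

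Now assume $|S|\geq 11$. If $d\leq 1$, then $S$ lies in some affine coset $v+\mathbb{Z}t$, and the map $v+nt\mapsto n$ identifies $S$ with a finite subset of $\mathbb{Z}$ of cardinality at least $11$ satisfying the same doubling estimate. Freiman's classical $(3k-3)$-theorem \cite{F3} then shows that this integer set is either contained in an arithmetic progression of length at most $2|S|-1$, or is a disjoint union of two arithmetic progressions with a common difference whose lengths sum to $|S|$. Pulling back through the identification, these are exactly the alternatives (i) and (ii). If $d=2$, then Freiman's dimension lemma is saturated, $|S+S|=3|S|-3$, and I would invoke the planar inverse theorems of Stanchescu (\cite{S2},\cite{S3}) to classify such extremal sets: for sufficiently large $|S|$, every such planar configuration is, up to an affine isomorphism of $\mathbb{Z}^2$, a union of two parallel arithmetic progressions sharing a common difference whose lengths sum to $|S|$, which again yields (ii).

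The main obstacle lies in the planar case $d=2$: for $|S|\geq 11$ one must exclude the various sporadic $2$-dimensional extremal sets (triangle- or corner-shaped configurations, which do saturate $|S+S|=3|S|-3$ at small cardinalities) that neither embed in a line nor decompose into two parallel progressions, and this will require a careful finite case analysis to pin down the precise threshold $11$. Once that planar classification is secured, the passage from $\mathbb{Z}^r$ back to $\langle S\rangle\subseteq G$ is automatic, since the group isomorphism transports arithmetic progressions to geometric progressions and sends the ``same common difference'' relation to the ``same common ratio'' relation.
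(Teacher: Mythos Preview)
Your argument is essentially the paper's own proof: reduce to $\mathbb{Z}^r$ via \cite{FHLM}, bound the Freiman dimension $d$ by combining Freiman's lemma $|S+S|\geq (d+1)|S|-\binom{d+1}{2}$ with $|S|\geq d+1$ to force $d\leq 2$ (hence $r\leq 3$), and then quote the one- and two-dimensional $3k-3$ theorems. Your only unnecessary hesitation is in the planar case: the exclusion of sporadic configurations for $|S|\geq 11$ is not something you need to redo by hand, since it is already contained in Freiman's Theorem~1.17 of \cite{F3} (equivalently Theorem~B of \cite{S2}), which the paper simply cites.
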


In general, this result cannot be improved. However, it will follow from our 
proof that a slightly more detailed result 
(namely, describing those sets $S$ with $|S| \leq 10$ involved in this statement) 
can be easily achieved if one employs 
more carefully the tools used in our proof. It will be shortly explained 
how to do this at the end of our proof.
For the sake of simplicity and avoiding technicalities, here we prefer 
a cleaner statement. 

We can go a step further and formulate also a precise generalized 
$(3k-2)$-type result.

\begin{Thm}[Abelian ordered $3k-2$ Theorem]
\label{3k-2}
Let $G$ be an orderable group and $S$ be a finite subset of $G$ such that  
$|S^2| =  3|S|-2$ and $\langle S \rangle$ is abelian. Then either 
$|S|=4$ or $\langle S \rangle$ is at most 3-generated.

Moreover, if $|S| \geq 12$, then one of the following two possibilities occurs:
\begin{itemize}
\item[(i)] $S$ is a subset of a geometric progression of length at most $2|S|+1$,
\item[(ii)] $S$ is contained in the union of two geometric progressions with 
the same ratio, such that the sum of their lengths is equal to $|S|+1$.
\end{itemize}
\end{Thm}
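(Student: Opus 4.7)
Since $\langle S\rangle$ is abelian and $G$ is orderable, $\langle S\rangle$ is torsion-free, and being finitely generated it is isomorphic (as an abstract group) to $\mathbb{Z}^{r}$ for some $r\geq 0$. Fixing any $s_{0}\in S$, the translate $T=S-s_{0}$ contains $0$ and satisfies $|T+T|=|S^{2}|$. I would then invoke Freiman's dimension lemma: if $T$ has affine dimension $d$ over $\mathbb{Q}$, then $|T+T|\geq (d+1)|T|-\binom{d+1}{2}$. Substituting $|T+T|=3|T|-2$ forces $(d-2)|T|\leq \binom{d+1}{2}-2$, which rules out $d\geq 3$ as soon as $|T|\geq 5$. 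Hence for $|S|\geq 5$ the set $T$ lies in a sublattice of rank at most $2$, which makes $\langle S\rangle=\langle s_{0}\rangle+\langle T\rangle$ at most $3$-generated; the boundary possibility $d=3$ leaves exactly $|S|=4$ in generic affine position, matching the stated exception.

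\textbf{Structural dichotomy when $|S|\geq 12$.} With the rank bound in hand, I would split according to the affine dimension $d\in\{1,2\}$ of $T$. If $d=1$, there exist $u,t\in\langle S\rangle$ with $S\subseteq u+\mathbb{Z}\cdot t$; writing $S=u+A\cdot t$ for $A\subseteq \mathbb{Z}$, we have $|A|=|S|$ and $|A+A|=3|A|-2$. Freiman's classical $(3k-2)$-theorem in the integers then yields that $A$ is contained in an arithmetic progression of length at most $2|A|+1$, or in the union of two APs with a common difference whose lengths sum to $|A|+1$; pulling back through the dictionary $a\leftrightarrow u+at$ gives precisely conclusions (i) and (ii). If $d=2$, I would pull the order on $\langle S\rangle$ back to a compatible total order on $\mathbb{Z}^{2}$ (necessarily of lexicographic type) and use the tight slack $|S^{2}|-(2|S|-1)=|S|-1$ in the ordered ``staircase'' lower bound, together with a projection to one coordinate along which $S+S$ is injected, to force $S$ to be covered by at most two parallel lines, each an arithmetic progression in a common direction, whose lengths sum to $|S|+1$. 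This is precisely case (ii).

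\textbf{Main obstacle.} The delicate point is the rank-$2$ analysis at the critical value $|S^{2}|=3|S|-2$, where Theorem~\ref{3k-3} does not apply directly: removing an extremal element $s\in S$ only removes two products $s+s$ and $s+s_{\mathrm{next}}$ from $S^{2}$, leaving $|(S')^{2}|\leq 3|S'|-1$ for $S'=S\setminus\{s\}$, which is one unit too large to feed into Theorem~\ref{3k-3}. I expect to overcome this by combining a finer ordered staircase count (tracking which sums $a_{i}+a_{j}$ fit between consecutive diagonal sums) with a rectification-type argument realizing $S$ as a Freiman $2$-isomorph of an integer set whenever possible, so as to transport the classical integer $(3k-2)$-theorem into $\mathbb{Z}^{2}$. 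The small cases $5\leq |S|\leq 11$ (and the special status of $|S|=4$) will then be handled by ad hoc verifications, in the spirit of the small-case remarks the authors make immediately after Theorem~\ref{3k-3}.
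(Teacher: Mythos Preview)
Your setup and the ``at most $3$-generated'' argument are essentially the paper's: the same Freiman-dimension inequality $|S^2|\geq (d+1)|S|-\binom{d+1}{2}$, the same case split $d\in\{1,2,3\}$, and the same conclusion that $d=3$ forces $|S|=4$ while $d\leq 2$ gives $m(S)\leq d+1\leq 3$. The $d=1$ structural case is also handled identically, by quoting Freiman's integer $(3k-2)$-theorem.

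The divergence, and the gap, is in the $d=2$ structural case. What you flag as the ``main obstacle'' the paper does not face at all: it simply invokes the existing two-dimensional structure theorem (Theorem~1.17 of \cite{F3}, equivalently Theorem~B of \cite{S2}), which already asserts that a set with Freiman dimension $2$ and $|S+S|\leq 3|S|-2$ is, for $|S|$ large enough, contained in two parallel arithmetic progressions whose lengths sum to $|S|+1$. Your proposed direct attack via an ordered staircase count, a coordinate projection, and a rectification step is not a proof as written. Two specific problems: first, orders on $\mathbb{Z}^2$ are \emph{not} ``necessarily of lexicographic type'' (Archimedean orders coming from an irrational linear functional also occur), so the reduction you sketch is incomplete; second, rectifying $S$ to a Freiman $2$-isomorphic subset of $\mathbb{Z}$ is only possible when the Freiman dimension is $1$, so this would not handle the genuinely planar case --- it would just re-derive the $d=1$ conclusion. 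The cleanest repair is to do exactly what the paper does and cite the known planar result rather than reprove it.
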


Again, this result is in general best possible and, again, at the price 
of increasing the number of
sporadic cases and doing a more careful examination of our proofs, 
one could replace the assumption $|S| \geq 12$ in Theorem \ref{3k-2} 
by a more precise list of possible situations. For the sake of 
clarity, we prefer such a neat statement.

We then investigate the general abelian case. We obtain the following
result.

\begin{Thm}
\label{ck}
Let $c$ be an integer, $c \geq 2$.
Let $G$ be an orderable group and $S$ be a finite subset of $G$ such that 
$\langle S \rangle$ is abelian. If 
$$
|S^2| < (c+1) |S| - \frac{c(c+1)}2,
$$ 
then  $\langle S \rangle$ is at most $c$-generated.
\end{Thm}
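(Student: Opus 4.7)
Since $\langle S\rangle$ is abelian and embedded in an orderable group it is torsion-free; being finitely generated it is free abelian of some rank $d$, so the desired conclusion simply reads $d\leq c$. Switching to additive notation, I argue by contrapositive and assume $d\geq c+1$; note that automatically $|S|\geq d\geq c+1$ since $S$ generates a rank-$d$ group.

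Fix any $s_0\in S$, translate to set $T=S-s_0$, and write $H':=\langle S-S\rangle=\langle T\rangle$, so that $0\in T$ and $|T+T|=|S+S|$. Since $s-s_0\in H'$ for every $s\in S$, the quotient $\langle S\rangle/H'$ is cyclic, generated by $s_0+H'$; hence the rank $d'$ of $H'$ satisfies $d-1\leq d'\leq d$, and in particular $d'\geq c$. Moreover $H'$ is generated by the $|S|-1$ elements $\{s-s_0:s\in S\setminus\{s_0\}\}$, so $d'\leq|S|-1$. Because $0\in T$, the affine hull of $T$ inside $\langle S\rangle\otimes_{\mathbb{Z}}\mathbb{Q}$ coincides with its $\mathbb{Q}$-linear span $\mathbb{Q}\cdot H'$, of dimension exactly $d'$.

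The conclusion now follows from Freiman's classical dimension lemma (a finite subset of $\mathbb{R}^{d'}$ of affine dimension $d'$ has doubling at least $(d'+1)|T|-\binom{d'+1}{2}$), which gives
$$
|S+S|=|T+T|\geq (d'+1)\,|S|-\binom{d'+1}{2}.
$$
The function $f(x):=(x+1)|S|-\binom{x+1}{2}$ satisfies $f(x+1)-f(x)=|S|-(x+1)\geq 0$ for $x\leq|S|-1$, and since $c\leq d'\leq|S|-1$ this yields $f(d')\geq f(c)$, i.e.\ $|S+S|\geq(c+1)|S|-c(c+1)/2$, contradicting the hypothesis. The main technical input is Freiman's lemma itself; the observation that the rank drops by at most $1$ when passing from $\langle S\rangle$ to $\langle S-S\rangle$ is exactly what produces the constant $c(c+1)/2$ rather than $(c+1)(c+2)/2$.
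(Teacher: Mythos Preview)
Your proof is correct and follows essentially the same route as the paper. Both arguments reduce to Freiman's dimension lemma $|2S|\geq (d+1)|S|-\binom{d+1}{2}$ together with the observation that the affine dimension of $S$ is at least $m(S)-1$; the paper packages the latter as Lemma~\ref{remark} (phrased in terms of Freiman dimension), while you obtain it directly by noting that $\langle S\rangle/\langle S-S\rangle$ is cyclic, which is exactly the same fact.
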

We obtain for instance the following corollary, corresponding to 
the case $c=3$ above.

\begin{cor}
\label{coro4k-6}
Let $G$ be an ordered group and $S$ be a finite subset of $G$ such 
that $\langle S \rangle$ is abelian. 
If $|S^2| \leq 4|S|-7$, then $\langle S \rangle$ is at most $3$-generated.
\end{cor}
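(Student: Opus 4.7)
The plan is to apply Theorem \ref{ck} directly, specialized to $c = 3$. The integer $c = 3$ clearly satisfies the hypothesis $c \geq 2$ of Theorem \ref{ck}, and substituting this value into the bound gives
\[
(c+1)|S| - \frac{c(c+1)}{2} = 4|S| - 6.
\]
Thus Theorem \ref{ck} asserts that if $\langle S \rangle$ is abelian and $|S^2| < 4|S| - 6$, then $\langle S \rangle$ is at most $3$-generated.

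To align this with the statement of the corollary, I would observe that $|S^2|$ and $4|S| - 6$ are both integers, so the strict inequality $|S^2| < 4|S| - 6$ is equivalent to the weak inequality $|S^2| \leq 4|S| - 7$, which is precisely the assumption of the corollary. Moreover, since an ordered group is in particular orderable (one just keeps the given order), all hypotheses of Theorem \ref{ck} are met, and the desired conclusion follows at once.

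There is essentially no obstacle in this deduction: the corollary is nothing more than the $c = 3$ specialization of Theorem \ref{ck}, restated using the natural weak inequality between integer-valued quantities. The only technical point is the translation between the strict and weak inequalities, which is automatic in the integer setting; all the substantive work is contained in the proof of Theorem \ref{ck} itself.
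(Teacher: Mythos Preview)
Your proof is correct and matches the paper's approach exactly: the paper simply states that the corollary corresponds to the case $c=3$ of Theorem~\ref{ck}, and your translation of the strict inequality $|S^2|<4|S|-6$ into $|S^2|\leq 4|S|-7$ (and the remark that ordered implies orderable) fills in the only routine details.
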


If needed, we could even make this result more precise and give the 
precise structure of $S$ by using the results of 
\cite{S1}, \cite{S2} and \cite{S3}.

In Section \ref{sec3}, we go back to the general case of non-necessarily 
abelian groups and 
study the maximal number of generators of $\langle S \rangle$. 
When merged with some of our preceding results, we obtain the following 
general result.

\begin{Thm}
\label{thmgeneral}
Let $G$ be an ordered group and $S$ be a finite subset of $G$. Then the 
following statements hold:
\begin{itemize}
\item[(i)] $|S^2|\geq 2|S|-1$,
\item[(ii)] If $2|S|-1\leq |S^2|\leq 3|S|-4$, then $\langle S \rangle$ 
is abelian and at most $2$-generated,
\item[(iii)] If $|S^2|=3|S|-3$, then $\langle S \rangle$ 
is abelian and at most $3$-generated,
\item[(iv)] Let $|S^2|= 3|S|-3+b$ for some integer
$b \geq 1$. 		
Then either $|S|=4$, $b=1$, $\langle S \rangle$ is abelian and at 
most $(b+3)$-generated
or $\langle S \rangle$ is at most $(b+2)$-generated.
\end{itemize}
\end{Thm}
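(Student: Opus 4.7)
The plan is to establish the four parts in order, drawing on the machinery already developed in the paper. Part (i) is the classical inequality for ordered groups: enumerating $S = \{s_1 < s_2 < \cdots < s_k\}$, the sequence
\[
s_1 s_1 < s_1 s_2 < \cdots < s_1 s_k < s_2 s_k < \cdots < s_k s_k
\]
lies in $S^2$ and is strictly increasing by order-compatibility of multiplication on both sides, yielding $2k-1$ distinct elements. Part (ii) is the $(3k-4)$-theorem of \cite{FHLM} recalled in the introduction: under $|S^2|\leq 3|S|-4$ the set $S$ lies in a commutative geometric progression $P_l(u,t)$, so $\langle S\rangle \subseteq \langle u,t\rangle$ is abelian and $2$-generated. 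Part (iii) is exactly Theorem \ref{3k-3}.

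The substance is in part (iv). Writing $|S^2| = 3|S|-3+b$ with $b\geq 1$, I split on whether $\langle S\rangle$ is abelian. If it is and $b = 1$, Theorem \ref{3k-2} applies directly and gives the announced dichotomy: either $|S|=4$ (the exceptional case, where $\langle S\rangle$ may be $(b+3)$-generated) or $\langle S\rangle$ is at most $3 = b+2$ generated. If $\langle S\rangle$ is abelian and $b\geq 2$, I would apply Theorem \ref{ck} with $c = b+2$; substituting $|S^2| = 3|S|-3+b$ into the hypothesis $|S^2| < (c+1)|S| - c(c+1)/2$ and rearranging collapses it to $|S| > (b+7)/2$. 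When this holds, Theorem \ref{ck} yields $(b+2)$-generation; when it fails, the estimate $(b+7)/2 \leq b+2$ (immediate for $b \geq 3$ and easily checked for $b = 2$) gives $|S| \leq b+2$, so $S$ itself furnishes a generating set of the required size.

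For the non-abelian branch of part (iv), I appeal to the generator bounds proved in Section \ref{sec3}, which control the number of generators of $\langle S\rangle$ in terms of the excess $|S^2| - (2|S|-1)$ without any commutativity assumption. In our normalisation this excess equals $|S| + b - 2$, and the resulting bound is precisely $b+2$. The small-$|S|$ cases with $|S|\leq b+2$ are again trivial, since then $S$ itself is a generating set of cardinality at most $b+2$.

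The main obstacle I anticipate is the non-abelian branch of (iv). In the abelian regime the problem reduces to a Freiman-type dimension inequality (Theorem \ref{ck}), but in the non-abelian regime products coming from non-commuting pairs can inflate $|S^2|$ without encoding independent generators, so the clean Freiman count cannot be transplanted directly. Separating such commutator-driven contributions to $S^2$ from genuine generator-driven ones is precisely what the ordered-group tools developed in Section \ref{sec3} must achieve, and this is the delicate step of the argument.
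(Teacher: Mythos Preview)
Your treatment of parts (i)--(iii) and of the abelian branch of (iv) is correct. In the abelian case with $b\geq 2$ the paper actually applies Theorem~\ref{ck} with the fixed value $c=4$ rather than your variable $c=b+2$: from $k\geq b+3\geq 5$ one gets $|S^2|\leq 4k-6<5k-10$, hence $\langle S\rangle$ is at most $4$-generated, which suffices since $4\leq b+2$. Your variant also works and is arguably more natural.

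The non-abelian branch of (iv), however, is not proved in your proposal. You say you ``appeal to the generator bounds proved in Section~\ref{sec3}'', but Section~\ref{sec3} consists only of Lemma~\ref{lemma1} and the proof of Theorem~\ref{thmgeneral} itself; there is no independent generator bound there to cite, so the reference is circular. Your closing paragraph acknowledges that this is ``the delicate step of the argument'' without actually supplying it.

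What the paper does here is an induction on $k=|S|$. Set $T=S\setminus\{x_k\}$. If every $x_ix_k$ and $x_kx_i$ with $i<k-1$ lies in $T^2$, Lemma~\ref{lemma1} forces $T\subseteq\langle x_{k-1},x_k\rangle$, so $\langle S\rangle$ is $2$-generated. Otherwise $|T^2|\leq |S^2|-3=3(k-1)-3+b$, and the inductive hypothesis applies to $T$. If $x_k\in\langle T\rangle$ then $\langle S\rangle=\langle T\rangle$ is non-abelian and inherits the $(b+2)$-generator bound. If $x_k\notin\langle T\rangle$ then $S^2=T^2\,\dot\cup\,(x_kT\cup Tx_k)\,\dot\cup\,\{x_k^2\}$; since $\langle S\rangle$ is non-abelian one has $x_k\notin C_G(T)$, and results from \cite{FHLM} give $|x_kT\cup Tx_k|\geq k$, whence $|T^2|\leq 3|T|-4$. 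Thus $\langle T\rangle$ is $2$-generated and $\langle S\rangle$ is $3$-generated. This induction, with its case split on membership and centrality of $x_k$ relative to $\langle T\rangle$, is the missing content of your argument.
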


In Sections \ref{sec4card3}, \ref{sec4lemgen}, \ref{sec4card4} and \ref{sec4}, 
we look for a complete description 
of $\langle S \rangle$, if $S$ is a finite subset of cardinality $\geq 4$ 
of an orderable group and $|S^2| = 3|S|-2$. The flavour of these results 
is more group-theoretical.
Here we briefly recall for completeness sake a few standard notations 
that we shall need. 
We refer to \cite{Robinson} for all the group-theoretic complementary notation 
that the reader may need for reading this paper.

If $a$ and $b$ are elements of a group $G$, their {\em commutator} is denoted by
$$
[a, b] =a^{-1}b^{-1}ab 
$$
and the {\em derived subgroup} of $G$, denoted by $G'$, is simply the subgroup 
of $G$ generated by its commutators. 
More generally, if $H$ and $K$ are two subgroups of $G$, we denote by $[H,K]$ 
the subgroup generated by the commutators 
of the form $[h,k]$ for $h$ in $H$ and $k$ in $K$. With this notation 
$$
G'=[G,G].
$$
We shall also use the classical notation
$$
a^b=b^{-1}ab.
$$
If $X \subseteq G$, we use the notation $C_G (X)$  for the {\em centralizer} 
of $X$ in $G$ defined as the subgroup 
of elements of $G$ commuting with all the elements of $X$ and  
$$
Z(G)=C_G(G)
$$
is the {\em center} of $G$. If $H$ is a subgroup of $G$, then the 
{\em normalizer} of $H$ in $G$ is by definition the subgroup 
$$
N_G(H) = \{ g \in G \ | \ g^{-1}Hg = H \},
$$ 
and it is the largest subgroup of $G$ containing $H$ in which $H$ is normal. 
Recall finally that,  if $n$ is a positive integer, 
a {\em soluble group of length at most n} is a group which has an 
{\em abelian series} of length $n$ that is, a finite chain 
\begin{equation}
\label{chain}
\{ 1 \} = G_0 \leq G_1 \leq \cdots \leq G_n = G
\end{equation}
of subgroups, such that $G_i$ is a normal subgroup of $G_{i+1}$ and 
$G_{i+1}/G_i$ is abelian for any index $0 \leq i \leq n-1$; 
and  a {\em nilpotent group of class at most n} is a group which has a 
{\em central series} of length $n$ of the form \eqref{chain} such that
$G_{i+1}/G_i$ is contained in the center of $G/G_i$, for any index 
$0 \leq i \leq n-1$. 
Soluble groups of length at most 2 are also called {\em metabelian}; 
they are exactly the groups $G$ such that $G'$ 
is abelian. Nilpotent groups  of class at most 2 are exactly the groups
$G$ such that the derived subgroup $G'$ is contained 
in the center $Z(G)$.

In order to state our $(3k-2)$-type result, we introduce a definition. 
In this paper, we shall say that an ordered group $G$ is 
{\em young} if one of the following occurs:
\begin{itemize}
\item[(i)] $G = \langle a, b  \rangle$ with $[[a, b],a]=[[a, b],b]=1$,	
\item[(ii)] $G = \langle a \rangle \times \langle b, c  \rangle$ with 
either $c^b = c^2$ or $(c^2)^b=c$ and $c\neq 1$,
\item[(iii)] $G = \langle a, b \rangle$ with  $a^b = a^2$ and $a\neq 1$,
\item[(iv)] $G = \langle a, b \rangle$ with $a^{b^2} = aa^b$, $[a, a^b] = 1$
and $a\neq 1$.
\end{itemize}
More precisely, we shall speak of an ordered young group of type (j) if the group 
satisfies the definition (j) in the list above (j being i, ii, iii or iv).
Notice that an ordered young group of type (iii) is a quotient of $B(1,2)$, 
the Baumslag-Solitar group \cite{BS}. 

Notice also that nilpotent ordered young groups are of type (i). This follows
from the fact that in nilpotent groups the derived subgroup is contained in the set 
of all non-generators (see for example \cite{R}, Lemma 2.22). 
Consequently, nilpotent ordered young groups are of class at most 2. 

Moreover, we claim the following.

\begin{Lemma}
\label{young}
An ordered young group is metabelian and a nilpotent ordered young
group is of nilpotency class at most $2$.
\end{Lemma}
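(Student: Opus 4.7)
The plan is to treat the four types (i)--(iv) of ordered young groups separately and, in each case, exhibit an abelian normal subgroup $A\trianglelefteq G$ with $G/A$ abelian, which forces $G''=\{1\}$. For the nilpotency bound I will rely on the observation, already noted just before the statement, that every nilpotent ordered young group must be of type (i); it then suffices to verify the class-two bound for type (i) and to justify that observation.

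For type (i), the hypothesis $[[a,b],a]=[[a,b],b]=1$ means exactly $[a,b]\in Z(G)$; the cyclic subgroup $\langle[a,b]\rangle$ is central, hence abelian and normal, and $G/\langle[a,b]\rangle$ is generated by two commuting images, so is abelian. This also gives that $G/Z(G)$ is abelian, i.e.\ $G$ is nilpotent of class at most $2$, which handles the nilpotency claim for this case. For types (ii) and (iii), the relation $c^b=c^2$ or $(c^2)^b=c$ (resp.\ $a^b=a^2$) realizes $\langle b,c\rangle$ (resp.\ $G$) as a quotient of the Baumslag--Solitar group $B(1,2)=\langle x,y\mid x^y=x^2\rangle\cong\mathbb{Z}[1/2]\rtimes\mathbb{Z}$, which is metabelian; since metabelianness is inherited by quotients and by direct products with an abelian group (as in type (ii), where $\langle a\rangle$ is a central direct factor), the conclusion follows. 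For type (iv), setting $a_n:=a^{b^n}$ and conjugating the two hypotheses by powers of $b$ yields the Fibonacci-like recursion $a_{n+2}=a_n\,a_{n+1}$ and the commuting relations $[a_n,a_{n+1}]=1$ for every $n\in\mathbb{Z}$. A straightforward induction, going up via $a_{n+2}=a_n a_{n+1}$ and down via $a_{n-1}=a_{n+1}a_n^{-1}$, then shows $[a_i,a_j]=1$ for all $i,j\in\mathbb{Z}$; hence the normal closure $A=\langle a_n\mid n\in\mathbb{Z}\rangle$ of $a$ is abelian and $G/A$, being cyclic (generated by the image of $b$), is abelian.

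For the nilpotency claim in types (ii), (iii), (iv), the critical element turns out to be itself a commutator: $[c,b]=c$ when $c^b=c^2$ (and $[c,b^{-1}]=c$ when $(c^2)^b=c$) in type (ii); $[a,b]=a$ in type (iii); and a short calculation using $[a,a^b]=1$ together with $a^{b^2}=a\cdot a^b$ gives $[a^b,b]=a$ in type (iv). If $G$ were nilpotent, then $G'\subseteq\Phi(G)$, so this critical element would be a non-generator and could be removed from the generating set, reducing $G$ to $\langle b\rangle$ (and forcing $c\in\langle b\rangle$ in type (ii) via the direct product structure); substituting $a=b^k$ (resp.\ $c=b^k$) into the defining relation then immediately forces $a=1$ (resp.\ $c=1$), contradicting the nondegeneracy hypothesis. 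The main obstacle I anticipate is the Fibonacci-style induction in type (iv); everything else reduces either to the well-known metabelianness of $B(1,2)$ or to short commutator identities.
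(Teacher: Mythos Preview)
Your proposal is correct and follows essentially the same strategy as the paper: for types (ii)--(iv) one shows that the normal closure of the ``small'' generator ($c$ in (ii), $a$ in (iii) and (iv)) is abelian with cyclic (hence abelian) quotient, and the nilpotency claim is reduced to type (i) via the non-generator observation already recorded just before the lemma. The only cosmetic difference is that for types (ii) and (iii) you invoke the known metabelianness of $B(1,2)\cong\mathbb{Z}[1/2]\rtimes\mathbb{Z}$ as a black box, whereas the paper carries out the same inductive commutator computation uniformly across all three cases; your Fibonacci-style induction for type (iv) is exactly the paper's argument.
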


\begin{proof}
This is obvious in case (i), since in this case $G$ is nilpotent of class at most 2. 

So suppose that either (ii) or (iii) or (iv) holds. 
Write $H = < c^{b^i} | i \in  \mathbb{Z}>$ in case (ii) and 
$H = <a^{b^i} | i \in \mathbb{Z}>$ in cases (iii) and (iv). 
Then $H$ is normal in $G$ and $G/H$ is abelian. Thus $G^{\prime}$ is 
contained in $H$, and since the converse is also true, it follows
that $G^{\prime} = H$. By induction on $n$ it is easy to prove 
that for every $n \in \mathbb{N}$,  $[c, c^{b^n}] = 1$ in case (ii) 
and $[a, a^{b^n}] = 1$ in cases (iii) and (iv). This result also implies that
$[c^{b^{-n}}, c] = 1$ in case (ii) and  $[a^{b^{-n}}, a] = 1$  in 
cases (iii) and (iv). Thus $c \in Z(H)$ in case (ii) and $a \in Z(H)$ 
in the other cases,  which implies that $H \leqslant Z(H)$. Hence $H$ is abelian
and $G$ is metabelian, as claimed.
\end{proof}

Our main result in this paper is the following theorem.

\begin{Thm} 
\label{3k-2general}
Let $G$ be an ordered group and let $S$ be a finite subset of $G$. 
If $|S| \geq 4$ and $|S^2| = 3|S|-2$, then $ \langle S \rangle$ is 
either abelian or young.
\end{Thm}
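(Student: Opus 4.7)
We combine the generation bound of Theorem \ref{thmgeneral} with a close analysis of the relations forced by orderability and by the doubling constraint. If $\langle S \rangle$ is abelian there is nothing to prove, so we assume $\langle S \rangle$ is non-abelian. Writing $|S^2|=3|S|-3+b$ with $b=1$, item (iv) of Theorem \ref{thmgeneral} then yields that $\langle S \rangle$ is at most $3$-generated, since its other alternative forces $\langle S \rangle$ to be abelian. Our task reduces to proving that such a non-abelian $\langle S \rangle$ is necessarily young of one of the four types (i)--(iv).

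Order $S = \{s_1 < s_2 < \cdots < s_k\}$. The monotone chain
$$
s_1 s_1 < s_1 s_2 < \cdots < s_1 s_k < s_2 s_k < \cdots < s_k s_k
$$
displays $2k-1$ distinct elements of $S^2$, so the hypothesis $|S^2|=3k-2$ leaves only $k-1$ further products to be accommodated among the remaining $(k-1)^2$ ordered pairs. Every forced equality $s_i s_j = s_p s_q$ yields a nontrivial word relation in $\langle S\rangle$, and combining several such relations with the orderability of $G$ produces commutator identities among the generators. The argument consists in extracting, from these identities together with the fact that $\langle S \rangle$ is at most $3$-generated, one of the four structural relations that define a young group.

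The case $|S|=4$ is treated separately in Section \ref{sec4card4}, because the counting argument above is then too weak to conclude on its own: a finite enumeration of configurations, aided by the generation lemma of Section \ref{sec4card3}, produces the four types by hand. For $|S|\geq 5$, the general lemmas of Section \ref{sec4lemgen} allow one to choose a distinguished pair of generators $a, b$ (typically built from extremal elements of $S$) and to identify which mixed words $a^i b^j$ must appear in $S^2$. Matching the ordering on $S \times S$ with the ordering on $\langle a, b\rangle$ forces one of the defining relations: $[a,[a,b]]=[b,[a,b]]=1$ (type (i)), the splitting off of a central cyclic factor $\langle a\rangle$ paired with $c^b=c^2$ or $(c^2)^b=c$ (type (ii)), the Baumslag--Solitar relation $a^b=a^2$ (type (iii)), or the Fibonacci-type relation $a^{b^2}=a\cdot a^b$ with $[a,a^b]=1$ (type (iv)).

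The main obstacle is exhaustiveness of the classification. One must rule out every other \emph{a priori} possible non-abelian structure, and two features of our setting are crucial in this elimination: orderability excludes any relation forcing torsion (such as $a^b=a^{-1}$), while the sharp equality $|S^2|=3|S|-2$ prevents any additional product coincidence beyond those already located by the counting argument. The delicate interplay between these two constraints, organised via a case split on which pairs among the generators of $\langle S\rangle$ commute, occupies the bulk of Section \ref{sec4}, and its successful resolution into exactly four non-abelian possibilities is what delivers Theorem \ref{3k-2general}.
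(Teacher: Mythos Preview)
Your plan is more a narrative than a proof, and it misses the actual mechanism that makes the argument work. The paper does \emph{not} proceed for $|S|\ge 5$ by ``choosing a distinguished pair of generators $a,b$'' and then ``matching orderings to force one of the four relations''; that description does not correspond to any argument in Sections \ref{sec4lemgen} or \ref{sec4}. The real proof is a short \emph{induction on $k=|S|$}. One removes the maximal element to form $T=S\setminus\{x_k\}$; Lemma \ref{lemgen2-3} gives $|T^2|\le 3|T|-2$. If the inequality is strict, then $\langle T\rangle$ is abelian by the $3k-3$ result and Proposition \ref{prop3} immediately yields that $\langle S\rangle$ is abelian or young. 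If equality holds, the inductive hypothesis applies to $T$, and one checks that either $x_k\in\langle T\rangle$ (so $\langle S\rangle=\langle T\rangle$ inherits the structure) or else the extra $k$ products $x_1x_k,\dots,x_k^2$ force $|T^2|\le 2k-2\le 3|T|-3$, bringing us back to the abelian-$T$ situation and Proposition \ref{prop3}.

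The point you are missing is that the classification into types (i)--(iv) is never carried out for general $|S|$: it is done \emph{once}, inside Proposition \ref{prop3} (the case $S=T\cup\{y\}$ with $\langle T\rangle$ abelian) and inside the $|S|=3,4$ analyses of Sections \ref{sec4card3}--\ref{sec4card4}. Your invocation of Theorem \ref{thmgeneral}(iv) to get $3$-generation is correct but plays no role in the actual proof; likewise your monotone-chain count of $2k-1$ products is true but not used. What is needed, and what your plan lacks, is the inductive reduction to the abelian-$\langle T\rangle$ case together with the explicit structural output of Proposition \ref{prop3}.
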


Theorem \ref{3k-3}, Theorem \ref{3k-2general} and Lemma \ref{young}
yield the following two corollaries.

\begin{cor}
\label{cor2}
Let $G$ be an ordered group and let $S$ be a finite subset of $G$.
If $|S| \geq 4$ and $|S^2|\leq 3|S|-2$, then $\langle S \rangle$ is metabelian.
\end{cor}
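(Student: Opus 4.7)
The plan is to split on the value of $|S^2|$. By Theorem \ref{thmgeneral}(i) we already know that $|S^2|\geq 2|S|-1$, but this bound plays no role here; what matters is only the upper bound assumed. Since $|S^2|$ is an integer satisfying $|S^2|\leq 3|S|-2$, there are exactly two mutually exclusive cases to treat, namely $|S^2|\leq 3|S|-3$ and $|S^2|=3|S|-2$.

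In the first case, Theorem \ref{3k-3} applies without any cardinality restriction and immediately yields that $\langle S\rangle$ is abelian, hence a fortiori metabelian. (The hypothesis $|S|\geq 4$ is not needed in this branch; it is only retained for uniformity with the second case.)

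In the second case, $|S^2|=3|S|-2$ with $|S|\geq 4$, so Theorem \ref{3k-2general} applies and yields that $\langle S\rangle$ is either abelian, in which case we are done, or young. In the young subcase, Lemma \ref{young} asserts precisely that an ordered young group is metabelian, which completes the proof.

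So the corollary is essentially a bookkeeping consequence of Theorem \ref{3k-3}, Theorem \ref{3k-2general} and Lemma \ref{young}, and there is no real obstacle to overcome: all of the work lies in the three cited statements themselves (in particular in Theorem \ref{3k-2general}, whose proof is the heart of the paper). The only point worth underlining in writing is that the branching on $|S^2|\leq 3|S|-3$ versus $|S^2|=3|S|-2$ exhausts the hypothesis $|S^2|\leq 3|S|-2$, and that the cardinality assumption $|S|\geq 4$ is precisely what is needed to invoke Theorem \ref{3k-2general} in the boundary case.
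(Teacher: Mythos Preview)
Your proof is correct and matches the paper's own justification exactly: the paper simply states that Corollary~\ref{cor2} follows from Theorem~\ref{3k-3}, Theorem~\ref{3k-2general} and Lemma~\ref{young}, and your case split on $|S^2|\leq 3|S|-3$ versus $|S^2|=3|S|-2$ is precisely the way these three results combine.
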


\begin{cor}
\label{cor1}
Let $G$ be an ordered group and let $S$ be a finite subset of $G$ such that
$\langle S \rangle$ is nilpotent.
If $|S| \geq 4$ and $|S^2| \leq 3|S|-2$, then $\langle S \rangle$ is
of nilpotency class at most $2$.
\end{cor}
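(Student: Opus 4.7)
The plan is to reduce Corollary \ref{cor1} to a direct combination of Theorem \ref{3k-3}, Theorem \ref{3k-2general} and Lemma \ref{young}, by splitting on the precise value of $|S^2|$ in the hypothesis $|S^2|\leq 3|S|-2$. Since by Theorem \ref{thmgeneral}(i) we always have $|S^2|\geq 2|S|-1$, the only two relevant sub-cases are $|S^2|\leq 3|S|-3$ and $|S^2|=3|S|-2$.

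In the first sub-case, where $|S^2|\leq 3|S|-3$, I would invoke Theorem \ref{3k-3} directly: it asserts that $\langle S\rangle$ is abelian, hence nilpotent of class at most $1$, which is trivially at most $2$, so the conclusion holds without using the nilpotency hypothesis. This disposes of the bulk of the range of $|S^2|$ with essentially no work.

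In the second sub-case, $|S^2|=3|S|-2$, I would apply Theorem \ref{3k-2general}, which (since $|S|\geq 4$) forces $\langle S\rangle$ to be either abelian or young. If $\langle S \rangle$ is abelian we are again done. Otherwise $\langle S\rangle$ is an \emph{ordered young group} which, by hypothesis, is nilpotent; then Lemma \ref{young} tells us exactly that a nilpotent ordered young group has nilpotency class at most $2$, finishing the proof. (Equivalently, one could use the remark preceding Lemma \ref{young} that a nilpotent ordered young group must be of type (i), which by definition satisfies $[[a,b],a]=[[a,b],b]=1$ and hence has class at most $2$.)

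Since the heavy structural work has already been packaged into Theorem \ref{3k-2general} and Lemma \ref{young}, no real obstacle remains in the corollary itself; the proof is a two-line case split. The only point that requires a moment of care is verifying that the ``young'' alternative of Theorem \ref{3k-2general} is genuinely compatible with nilpotency only through type (i), so that Lemma \ref{young} gives the correct bound of $2$ rather than merely metabelianness.
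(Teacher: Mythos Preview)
Your proposal is correct and matches the paper's approach exactly: the authors state that Corollary \ref{cor1} is a direct consequence of Theorem \ref{3k-3}, Theorem \ref{3k-2general} and Lemma \ref{young}, and your case split on whether $|S^2|\leq 3|S|-3$ or $|S^2|=3|S|-2$ is precisely how these results combine. The reference to Theorem \ref{thmgeneral}(i) is harmless but unnecessary, since the case split already exhausts the hypothesis $|S^2|\leq 3|S|-2$.
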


Our preceding results show that if $|S| \geq 3$ and $|S^2| \leq 3|S|-3$, 
then $\langle S \rangle$ is abelian, 
but, for any integer $k \geq 3$, there exist an ordered group $G$ and a 
subset $S$ of $G$ of size $k$ 
with $|S^2| = 3|S|-2$ and $\langle S \rangle$ non-abelian \cite{FHLM}.
Moreover, we have proved in Corollary \ref{cor2} that if $|S| \geq 4$ and 
$|S^2| \leq 3|S| -2$, then $\langle S \rangle$ is metabelian. 
It is now natural to ask whether there exist an ordered group $G$ and 
a positive integer $b$, such that for any integer $k$ there is 
a subset $S$ of $G$ of 
order $k$ with $|S^2| = 3|S|-2+b$ and $\langle S \rangle$ non-metabelian 
or more generally non-soluble. 
We give a negative answer to this question in Section \ref{sec5} 
by proving the following result.

\begin{Thm}
\label{3k-2+s}
Let $G$ be an ordered group and $s$ be any positive integer.  
If $S$ is a subset of $G$ of cardinality $\geq 2^{s+2}$ such that 
$|S^2| = 3|S|-2+s$, then $\langle S \rangle$ is metabelian.
Moreover, if $G$ is nilpotent, then  $\langle S \rangle$ is
nilpotent of
class at most $2$.
\end{Thm}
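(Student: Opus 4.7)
We plan to proceed by induction on $s \geq 1$. The ``previous level'' $s-1 = 0$, corresponding to $|S^2| \leq 3|S|-2$ with $|S| \geq 4 = 2^{0+2}$, is exactly Corollary \ref{cor2} (for the metabelian conclusion) and Corollary \ref{cor1} (for nilpotency class at most $2$), both of which are themselves consequences of Theorem \ref{3k-2general} and Lemma \ref{young}. This supplies the base of the induction.

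For the inductive step, suppose the theorem is known at level $s-1$, and let $S \subseteq G$ satisfy $|S| \geq 2^{s+2}$ and $|S^2| = 3|S|-2+s$. The doubling of the required cardinality between consecutive levels---from $2^{s+1}$ to $2^{s+2}$---strongly suggests that the inductive reduction should pass from $S$ to a subset of roughly half its size. Accordingly, I plan to exhibit $T \subseteq S$ with $|T| \geq 2^{s+1}$, $|T^2| \leq 3|T|-2+(s-1)$, and $\langle T \rangle = \langle S \rangle$. Applying the induction hypothesis to $T$ then yields that $\langle T \rangle = \langle S \rangle$ is metabelian, and of nilpotency class at most $2$ in case $G$ is nilpotent.

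The constraint $\langle T \rangle = \langle S \rangle$ is the easier one to arrange: by Theorem \ref{thmgeneral}(iv), applied with $b = s+1 \geq 2$, the subgroup $\langle S \rangle$ is already generated by at most $s+3$ elements of $S$. Since $|S|/2 \geq 2^{s+1} > s+3$, there is ample room to insist that $T$ contains a fixed generating subset of $\langle S \rangle$ of that size.

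The main obstacle is securing the doubling bound $|T^2| \leq 3|T|-2+(s-1)$ for such a $T$. A naive choice of a half-sized $T$ would only yield $|T^2| \leq |S^2| \approx 6|T|-2+s$, far above the target. The ordered structure of $G$ must be exploited more finely: one natural route is to invoke the structure theorems underpinning Theorem \ref{3k-2general} (together with the corresponding abelian $3k-2$ statement, Theorem \ref{3k-2}), which, when $|S^2|$ is close to $3|S|$, confine $S$ within a small number of geometric progressions with a common ratio, in a metabelian subgroup of $G$ (a young subgroup, in the non-abelian case). Taking $T$ to be the intersection of $S$ with the densest such progression---whose doubling is close to $2|T|-1$---would deliver the required bound. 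The exponential cardinality hypothesis $|S| \geq 2^{s+2}$ is engineered precisely so that, after this restriction, $|T|$ still meets the threshold $2^{s+1}$ needed to close the induction.
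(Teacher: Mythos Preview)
Your plan has a genuine gap: requirements (b) $|T^2|\le 3|T|-2+(s-1)$ and (c) $\langle T\rangle=\langle S\rangle$ pull in opposite directions, and your sketch does not reconcile them. The only mechanism you offer for (b) is to take $T$ inside a single geometric progression; but then $\langle T\rangle$ is abelian, so (c) fails in the interesting (non-abelian) case. Throwing in an extra $s+3$ generators of $\langle S\rangle$ to rescue (c) gives no control on $|T^2|$: once $T$ contains two elements that do not commute, no ``dense progression'' argument bounds its doubling. More seriously, the structural input you invoke---that $S$ is covered by a few geometric progressions inside a young subgroup---is only proved in the paper for $|S^2|\le 3|S|-2$ (Theorems~\ref{3k-3}, \ref{3k-2}, \ref{3k-2general}). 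For $|S^2|=3|S|-2+s$ with $s\ge1$ no such containment is known \emph{a priori}; deducing it would in particular give that $\langle S\rangle$ is metabelian, which is precisely the statement to be proved. So this step is circular.

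The paper's argument separates the two tasks you are trying to do at once. First, it splits $S$ into its lower half $T$ and upper half $V$; the ordering forces $T^2$, $V^2$ and two further products to be pairwise disjoint inside $S^2$, so a pure counting gives that one of $|T^2|,|V^2|$ is at most $3|T|-2+s/2$ (Lemma~\ref{lem88} is used to justify the bound when passing to an initial segment). Induction on $s$---halving $s$, not decrementing by one---then shows that $\langle T\rangle$ (say) is metabelian. Second, and this is the step your outline is missing entirely, the paper extends: it takes the largest initial segment $X\supseteq T$ with $\langle X\rangle$ metabelian, and if $X\ne S$, applies Corollary~\ref{cororo} (built on Lemma~\ref{lem7}) to $W=X\cup\{x_{j+1}\}$ to force $\langle W\rangle$ metabelian, contradicting maximality. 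It is this extension lemma---not any global structure theorem---that bridges from ``a large piece of $S$ generates a metabelian group'' to ``$\langle S\rangle$ is metabelian''.
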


Our final result, Theorem \ref{construction}, is a constructive result.

\begin{Thm}
\label{construction}
There exists a non-soluble ordered group $G$ such that for any integer 
$k \geq 3$, there 
exists a subset $S$ of $G$ 
of cardinality $k$ such that $|S^2| = 4|S|-5$ and $\langle S \rangle=G$.
In particular, $\langle S \rangle$ is non-soluble.
\end{Thm}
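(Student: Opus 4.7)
I will prove Theorem \ref{construction} by an explicit construction. Take $G = F_2$, the free group on two generators $a$ and $b$. Classical results on orderable groups guarantee that $F_2$ admits a bi-invariant total ordering (see for instance \cite{N} or \cite{MA}), so $G$ can be viewed as an ordered group. On the other hand, the derived subgroup $[F_2, F_2]$ is free of infinite rank by Nielsen--Schreier, so by induction the derived series of $F_2$ never reaches the trivial subgroup; hence $G$ is non-soluble.

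For every integer $k \geq 3$, I propose to take
\[
S_k = \{1, a, a^2, \ldots, a^{k-2}, b\}.
\]
Since $b$ is not a power of $a$ in $F_2$, the set $S_k$ has exactly $k$ elements, and clearly $\langle S_k \rangle = \langle a, b \rangle = G$. To count $|S_k^2|$, write $A = \{1, a, \ldots, a^{k-2}\}$, so that $S_k^2 = (A \cdot A) \cup (A \cdot b) \cup (b \cdot A) \cup \{b^2\}$. Using the uniqueness of reduced words in the free group, the set $A \cdot A = \{1, a, \ldots, a^{2k-4}\}$ has $2k-3$ elements; the sets $A \cdot b = \{a^i b : 0 \leq i \leq k-2\}$ and $b \cdot A = \{b a^j : 0 \leq j \leq k-2\}$ each have $k-1$ elements; they meet only in the singleton $\{b\}$ (the value $1 \cdot b = b \cdot 1$); and both are disjoint from $A \cdot A$ (no factor $b$) and from $\{b^2\}$. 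Hence
\[
|S_k^2| = (2k-3) + 2(k-1) - 1 + 1 = 4k-5,
\]
as required. This yields simultaneously $|S_k| = k$, $|S_k^2| = 4|S_k| - 5$, and $\langle S_k \rangle = G$ non-soluble.

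The main obstacle here is not really the arithmetic but the assurance that orderability of free groups is available; this is a standard but non-trivial fact which I am content to cite rather than reprove. Once that is granted, the non-solubility of $F_2$ and the cardinality computation are routine, the latter following directly from the normal form theorem in free groups, which ensures that no unexpected identifications occur between powers of $a$, words of the form $a^i b$, words of the form $b a^j$, and the element $b^2$.
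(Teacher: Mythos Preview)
Your proof is correct and follows essentially the same strategy as the paper's: take a geometric progression $T$ of length $k-1$ together with one extra element $b$ so that $|bT\cap Tb|=1$, and count. The only difference is cosmetic: the paper works in $G=\langle a\rangle\times\langle b,c\rangle$ with $T=\{a,ac,\dots,ac^{k-2}\}$, whereas you work directly in $F_2=\langle a,b\rangle$ with $T=\{1,a,\dots,a^{k-2}\}$; your choice is slightly more economical since the extra cyclic factor in the paper's construction plays no essential role (orderability of $F_2$, which both arguments ultimately rely on, already suffices).
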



\section{The abelian case : proofs of Theorems \ref{3k-3}, \ref{3k-2} 
and \ref{ck}} 
\label{sec2}

Let $S$ be a finite subset of an ordered group $G$ and suppose that
$\langle S \rangle$ is abelian. Since $\langle S \rangle$ is finitely 
generated and ordered (thus torsion-free), it is 
isomorphic to some $({\mathbb Z}^m, + )$, for $m=m(S)$, an integer. 
In other words, the additive group $\langle S \rangle$ is 
an {\it $m$-generated group}.
We may thus make our reasoning in this setting, which is simpler 
and was already studied. 
Notice that in this section we shall always use the additive notation.

The notion of {\em Freiman dimension} 
of the set $S$ will be needed here \cite{F3}. 
Recall first that $S$ is {\em Freiman isomorphic} to a set 
$A\subseteq {\mathbb Z}^d$
if there exists a bijective mapping $F:S\to A$ 
such that the equations $g_1+g_2=g_3+g_4$ and $F(g_1) +F(g_2)=F(g_3) +F(g_4)$
are equivalent for all $g_1,g_2,g_3,g_4 \in S$ (i.e. sums of two elements
of $S$ coincide if and only if their images have the same property).

The {\em Freiman dimension} of $S$ 
is then defined as 
the largest integer $d=d(S)$ such that $S$ is Freiman-isomorphic to a subset $A$
of ${\mathbb Z}^d$ not contained 
in an affine hyperplane of ${\mathbb Z}^d$ (i.e. $A$ is not contained in
a subset of ${\mathbb Z}^d$ of the form $L=a+W$, where $a\in {\mathbb Z}^d$
and 
$W=\mathbb Z\omega_1\oplus\dots\oplus\mathbb Z\omega_{d-1}$).

We recall immediately the basic inequalities linking $m(S), d(S)$ and $|S|$ for a subset $S$ of some 
ordered abelian group.

\begin{Lemma}
\label{remark}
Let $S$ be a finite subset of an ordered group $G$.
Assume that $\langle S \rangle$ is abelian.
Then
$$
m(S) \le d(S)+1 \leq |S|.
$$
\end{Lemma}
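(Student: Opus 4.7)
The plan is to prove the two inequalities in the chain $m(S) \le d(S)+1 \le |S|$ separately, using only elementary properties of Freiman isomorphisms together with the structure of finitely generated subgroups of $\mathbb{Z}^n$.

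For the right-hand inequality $d(S)+1 \le |S|$, I would invoke the definition of $d(S)$ directly: the set $S$ is Freiman isomorphic to some $A \subseteq \mathbb{Z}^{d(S)}$ not contained in any affine hyperplane, and $|A|=|S|$ since the isomorphism is a bijection. A standard linear algebra argument over $\mathbb{Q}$ then shows that any subset of $\mathbb{Z}^d$ whose affine hull is all of $\mathbb{Q}^d$ must contain an affinely independent $(d+1)$-tuple, and hence has cardinality at least $d+1$. Applying this with $d = d(S)$ gives $|S| \ge d(S)+1$.

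For the left-hand inequality $m(S) \le d(S)+1$, the crucial preliminary observation is that for any $s_0 \in S$ one has $\langle S \rangle = \mathbb{Z} s_0 + H$, where $H := \langle S - S \rangle$, since every $s \in S$ decomposes as $s_0 + (s - s_0)$ with $s - s_0 \in H$. Consequently the quotient $\langle S \rangle / H$ is cyclic, and hence $\mathrm{rank}(H) \in \{m(S), m(S)-1\}$. I would also record the easy remark that if $A \subseteq \mathbb{Z}^d$ lies in an affine hyperplane $a + W$ with $W$ of rank $d-1$, then fixing any $a_0 \in A$ gives $a - a_0 \in W$, so that $\langle A - A \rangle = \langle A - a_0 \rangle \subseteq W$ has rank at most $d - 1$; equivalently, if $\langle A - A \rangle$ has rank $d$, then $A$ is not contained in any affine hyperplane of $\mathbb{Z}^d$.

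With these tools I split on the value of $\mathrm{rank}(H)$. If $\mathrm{rank}(H) = m(S)$, I compose with a group isomorphism $\langle S \rangle \cong \mathbb{Z}^{m(S)}$, which is automatically a Freiman isomorphism, to realise $S$ as a subset $A$ of $\mathbb{Z}^{m(S)}$ with $\langle A - A \rangle$ of full rank $m(S)$; the preliminary remark then shows that $A$ is not contained in any affine hyperplane, so $d(S) \ge m(S)$. If $\mathrm{rank}(H) = m(S)-1$, I translate $S$ by $-s_0$ in order to embed $S - s_0$ into $H \cong \mathbb{Z}^{m(S)-1}$; translation is a Freiman isomorphism, and since $\langle (S - s_0) - (S - s_0) \rangle = H$ has full rank $m(S)-1$ in $\mathbb{Z}^{m(S)-1}$, the image is again not contained in any affine hyperplane. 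In either case $d(S) \ge m(S) - 1$, which yields $m(S) \le d(S) + 1$ as required. The only mildly delicate point is establishing the dichotomy on $\mathrm{rank}(H)$; everything else amounts to bookkeeping.
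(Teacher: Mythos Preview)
Your proposal is correct and follows essentially the same route as the paper. Your subgroup $H=\langle S-S\rangle$ has rank equal to the paper's quantity $r$ (the minimal dimension of an affine coset containing $S$), and your dichotomy $\mathrm{rank}(H)\in\{m(S)-1,m(S)\}$ is exactly the paper's dichotomy $r\in\{m-1,m\}$; in fact your cyclic-quotient argument for this dichotomy is more explicit than the paper's, which simply asserts $r\le m\le r+1$ without justification.
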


Both inequalities are tight as shown by the example
$$
S=\{(0,1),(1,1)\}\subseteq {\mathbb Z}^2.
$$
Indeed, $d(S)=1$,
while $\langle S \rangle={\mathbb Z}^2$ and $m(S)=2=|S|$.

\begin{proof}
By our assumptions,
the additive group $\langle S \rangle $ is
isomorphic to $({\mathbb Z}^m, + )$, for $m=m(S)$, an integer.
Without loss of generality we may assume that
$$S \subseteq {\mathbb Z}^m\quad \text{and}\quad\langle S \rangle = {\mathbb Z}^m.$$

Let $r$ be the smallest dimension of a subspace
$W ={\mathbb Z}w_1\oplus...\oplus {\mathbb Z}w_r\subseteq {\mathbb Z}^m$ such that
$$L=a+W= a+{\mathbb Z}w_1\oplus...\oplus {\mathbb Z}w_r$$
contains $S$ for some $a\in {\mathbb Z}^m$. Then
$$r\leq m=m(S)\leq r+1,$$
which implies that either $r=m-1$ or $r=m$.

If $r=m-1$, then 
there is a point $a \in  {\mathbb Z}^m$ and a subspace 
$W={\mathbb Z}w_1\oplus...\oplus {\mathbb Z}w_{m-1}\subseteq {\mathbb Z}^m$
such that 
$$S \subseteq a+W.$$
The set $S^* = S-a=\{s-a\mid s\in S\}$ 
is Freiman isomorphic to $S$ by the trivial correspondence:
$s \leftrightarrow s-a$ for all $s\in S$ and it is contained in 
$W={\mathbb Z}w_1\oplus...\oplus {\mathbb Z}w_{m-1}.$
Let $$T:W \rightarrow {\mathbb Z}^{m-1}$$ be the unique linear isomorphism defined by 
$$T(w_1)=e_1=(1,0,...,0),...,T(w_{m-1})=e_{m-1}=(0,...,0,1).$$
Define $A=T(S^*)$.
Using the hypothesis $r = m-1$, it follows that the set $A$ is not contained 
in an affine hyperplane of ${\mathbb Z}^{m-1}$. Therefore
$d(A)\ge m-1$. The sets $A$, $S^*$ and $S$ are Freiman isomorphic to each other
and thus 
$$d(S)=d(S^*)=d(A)\ge m-1.$$

If $r=m$, then 
the set $S$ is not contained in an affine hyperplane of ${\mathbb Z}^m.$
Thus $d(S) \ge m > m-1.$
Hence $m\leq d(S)+1$ in all cases.

We prove now the inequality $ d +1 \le |S|$. Let $|S|=k$. 
The set $S$ is Freiman-isomorphic to
a subset
$A =\{a_0,a_1,...,a_{k-1}\} \subseteq {\mathbb Z}^d$ not contained
in an affine hyperplane of ${\mathbb Z}^d $. Without loss of generality,
we may assume that $a_0=0$ (indeed, we may replace $A$ by the  
translate $A-a_0$
and use
$d(A)=d(A-a_0)$ and $|A|=|A-a_0|$).
The set $A$ is contained in the subspace
$$W={\mathbb Z}a_0+{\mathbb Z}a_1+...+{\mathbb Z}a_{k-1}={\mathbb
Z}a_1+...+{\mathbb Z}a_{k-1}.$$
If $k \le d$, then $m(W) \le k-1 \le d-1$ and this contradicts our hypothesis
that $A$ is not contained in an affine hyperplane of ${\mathbb Z}^d $.
Hence $k=|A|=|S| \ge d+1$ as required. The proof of the lemma is complete.
\end{proof}
 
For the proof of the Theorems we shall use  Freiman's
theorem (Lemma 1.14 of \cite{F3}), stating that 
for any finite subset $S$ of a torsion free group with $\langle S \rangle$ abelian 
and with Freiman dimension $d$, 
the following lower bound 
for the cardinality of $S^2$ holds:
\begin{equation}
\label{freiman}
| S^2 | \geq (d+1)|S| - \frac{d(d+1)}2.
\end{equation}
This inequality was actually proved in Lemma 1.14 of \cite{F3} for sets with 
affine dimension $d$. However, if $S$ has Freiman dimension $d$, then $S$
is  Freiman isomorphic to some $S^{*}\subseteq  {\mathbb Z}^d $ which has 
affine dimension $d$ and satisfies the equations $|S^{*}|=|S|$ and 
$|(S^{*})^2|=|S^2|$. Therefore 
$$|S^2|=|(S^{*})^2|\geq (d+1)|S^{*}|-\frac {(d+1)d}2=(d+1)|S|-\frac {(d+1)d}2.$$

The proof of the Theorems can now be obtained easily.
Notice that  by Theorem 1.3 
in \cite{FHLM}, also the set $S$ of Theorem \ref{3k-3}
generates an abelian group. 

\begin{proof}[Proof of Theorems \ref{3k-3} and \ref{3k-2}, the common part]
Let $c=2$ or $3$. By our assumptions and \eqref{freiman} we obtain
$$
3 |S| -c \ge  | S^2 | \geq (d+1)|S| - \frac{d(d+1)}2,
$$
where $d=d(S)$. This yields
\begin{equation}
\label{precise}
(d-2)|S| \leq \frac{d(d+1)}2 - c.
\end{equation}
Then, using Lemma \ref{remark}, we obtain
$$
(d-2)(d+1) \leq \frac{d(d+1)}2 - c 
$$
and therefore
\begin{equation}
\label{commut}
(d-1)(d-2) \leq 2(3-c).
\end{equation}
\end{proof}

Now, we have to continue separately the study of the cases $c=2$ and $3$.

\begin{proof}[Proof of Theorem \ref{3k-3}, concluded] 
Here $c=3$, and as mentioned above,  $\langle S \rangle$ is abelian.
Thus \eqref{commut} implies that either $d=1$ or $d=2$. 
Hence by Lemma \ref{remark} $m(S)\leq 3$ 
and $\langle S \rangle$ is at most $3$-generated, as required.

Suppose now that $|S|\geq 11$. Since either $d=1$ or $d=2$, we are back to
the case of the $1$- or the $2$-dimensional $3k-3$ theorem in the integers. 
If $d=1$, such sets are described in Theorem 1.11 in \cite{F3} (or see \cite{HP}): 
they are subsets of an arithmetic progression of length $2|S|-1$ 
or the union of two arithmetic progressions with the same ratio 
(a remaining case corresponds 
to bounded cardinality). 
If $d=2$, such sets are described by Theorem 1.17 in \cite{F3} 
(or see Theorem B in \cite{S2}): 
they are the union of two arithmetic progressions with the same ratio
(there are also remaining cases of bounded cardinality). 
 
This is enough to prove Theorem \ref{3k-3}.
If one wants to be more precise, it is enough to use the same Theorems 1.11 and 1.17 of 
\cite{F3} quoted above, but in their precise forms.
\end{proof}

\begin{proof}[Proof of Theorem \ref{3k-2}, concluded]
Here $c=2$, thus the bound \eqref{commut} implies that $d=1, 2$ or $3$. If $d=3$, then
by \eqref{precise}, we obtain that $|S| \leq 4$ and if $d=1,2$, then  
$\langle S \rangle$ is at most $3$-generated by Lemma \ref{remark}. Hence either $|S|=4$ or 
$\langle S \rangle$ is at most $3$-generated, as required.

If $d=1$, we are back to the $3k-2$ theorem in the integers. Hence it 
follows by Theorem 1.13 of \cite{F3} (or see the original publication \cite{F2}; we notice that 
in the original statement of this theorem, there is a missing sporadic case of size $11$ which 
makes it necessary to have $12$ here instead of $11$) that if $|S| \geq 12$ , 
then $S$ must be either a subset of a short 
geometric progression or a geometric progression minus its second element, 
together with an isolated point, or, finally, a geometric progression together
with another geometric progression of length $3$ with the same ratio and with 
the middle term missing. The last possibility is missing in Theorem 1.13, as
printed in \cite{F3}. Thus $S$ satisfies either (i) or (ii).

If $d=2$, then the set is of Freiman dimension $2$ and its structure 
is described  in Theorem 1.17 of \cite{F3} (or see Theorem B  in \cite{S2}). 
This case leads to possibility (ii) in the statement of Theorem \ref{3k-2}. 
Again, if one wishes to deal with small cardinalities for $S$, 
one simply has to use more precise versions of Freiman's theorems. 
\end{proof}

The proof of Theorem \ref{ck} follows the same lines.

\begin{proof}[Proof of Theorem \ref{ck}]
As above, if $d=d(S)$, we must have
$$
(c+1)|S|- \frac{c(c+1)}2 > | S^2 | \geq (d+1)|S| - \frac{d(d+1)}2.
$$
This implies by factorization that
$$
(d-c)|S| < \frac{(c-d)(c+d+1)}{2}			
$$
from which it follows that one cannot have $d \geq c$. Thus $d \leq c-1$ and the conclusion follows by Lemma \ref{remark}.
\end{proof}


\section{On the generators of $\langle S \rangle$: proof of Theorem \ref{thmgeneral}}
\label{sec3}

We now come back to the general case of non-necessarily abelian groups and start with a lemma.

\begin{Lemma}
\label{lemma1}
Let $G$ be an ordered group. Let $T$ be a non-empty finite subset of $G$ and let $t$ denote its 
maximal element $\max T$. Let $x \in G$ be an element satisfying $x> t$. If 
$yx, xy  \in T^2$ for each $y \in T (\setminus \{  t \})$, then
$T \subseteq  \langle t, x\rangle$. 
\end{Lemma}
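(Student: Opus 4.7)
The plan is to argue by contradiction. Assume $T \not\subseteq \langle t, x\rangle$ and let $u$ denote the largest element of the nonempty finite set $U := T \setminus \langle t, x\rangle$. Since $t$ itself lies in $\langle t, x\rangle$, we have $u < t$, so the hypothesis applies at $u$: both $ux$ and $xu$ lie in $T^2$, say $ux = pq$ and $xu = p'q'$ with $p,q,p',q' \in T$.

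The first step is to extract order information from these two factorisations, using only $x > t$ and the order-compatibility axioms. Since $ux > ut$ while $pq \leq pt$, we must have $p > u$; a symmetric argument on the right-hand side gives $q' > u$. By maximality of $u$ in $U$, this forces $p, q' \in \langle t, x\rangle$. Rewriting $u = pqx^{-1} = x^{-1}p'q'$, if either $q$ or $p'$ also belonged to $\langle t, x\rangle$, we would immediately conclude $u \in \langle t, x\rangle$, contradicting $u \in U$. So we must have $q, p' \in U$, i.e.\ $q \leq u$ and $p' \leq u$.

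The contradiction then comes by feeding $q \leq u$ and $p' \leq u$ back into the factorisations, which respectively yield $ux = pq \leq pu \leq tu$ and $xu = p'q' \leq uq' \leq ut$. Combined with $ut < ux$ and $tu < xu$ (both immediate from $x > t$ by left- and right-multiplication by $u$), these chain into $ux < xu$ and $xu < ux$ simultaneously, which is absurd. Hence $U = \emptyset$, so $T \subseteq \langle t, x\rangle$.

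The only subtle point is the bookkeeping in the order computations: one has to apply left and right multiplications in just the right pattern so that a contradictory pair of strict inequalities between $ux$ and $xu$ falls out. Notice also that both hypotheses $ux \in T^2$ \emph{and} $xu \in T^2$ are essential: having only one of them would not close the loop at the final step.
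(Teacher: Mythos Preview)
Your proof is correct. Both your argument and the paper's proceed by descent from the top: you phrase it as taking the maximal element $u$ of $T \setminus \langle t, x\rangle$, while the paper runs an explicit induction on the elements of $T$ from $t$ downward, but these framings are equivalent. The tactical difference lies in how the contradiction is extracted. The paper works with only the \emph{larger} of $ux$ and $xu$: assuming without loss of generality $ux \geq xu$ and writing $ux = t_a t_b$ with $t_a, t_b \in T$, it shows directly that \emph{both} factors satisfy $t_a > u$ and $t_b > u$ (the inequality $t_b > u$ uses $ux \geq xu$), so both lie in $\langle t, x\rangle$ by maximality and hence so does $u = t_a t_b x^{-1}$. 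Your route instead uses both factorisations $ux = pq$ and $xu = p'q'$, pulls one large factor out of each ($p > u$ and $q' > u$), and then plays the residual bounds $q \leq u$, $p' \leq u$ off against each other to manufacture the contradictory pair $ux < xu$ and $xu < ux$. The paper's argument is a little shorter and in fact establishes the slightly stronger statement that one only needs $\max(yx, xy) \in T^2$ for each $y \in T \setminus \{t\}$; so your closing remark that both hypotheses are essential is accurate for your particular argument but not for the lemma itself.
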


\begin{proof}
Write $l=|T|$ and 
$$
T = \{t_l, t_{l-1}, \cdots, t_1\}, \quad t_l < t_{l-1} < \cdots < t_1=t.
$$

We prove by induction on $j$ ($1 \leq j \leq l$) that $t_j \in  \langle t, x\rangle$. 

If $j=1$, we have $t_1=t \in \langle t, x \rangle$. 

Assume that we have proved that $t_1,\dots, t_j \in \langle t, x \rangle$
for some $j$ satisfying $1\leq j \leq l-1$.
Without loss of generality, we may assume that $x t_{j+1} \leq t_{j+1} x$. 

By assumption, we have $x t_{j+1}, t_{j+1} x \in T^2$. It follows that 
$t_{j+1} x = t_u t_v$ for some $t_u,t_v\in T$. But $x>t \geq  t_u,t_v$
and $t_{j+1} x = t_u t_v\geq x t_{j+1}$, so  
the ordering implies that $t_u,t_v >t_{j+1}$.
Therefore
$u,v \leq j$, and the induction hypothesis implies that 
$t_u, t_v \in \langle t, x \rangle$.
Consequently  $t_{j+1}= t_u t_v x^{-1} \in \langle  t, x \rangle$ and
the inductive step is completed. Hence $T \subseteq  \langle t, x\rangle$, as required.
\end{proof}

We pass now to the proof of Theorem \ref{thmgeneral}.

\begin{proof}[Proof of Theorem \ref{thmgeneral}]
For parts (i), and (ii), see \cite{FHLM}, Theorem 1.1 and Corollary 1.4,
respectively. Assertion (iii) follows by our Theorem \ref{3k-3}.

Our aim now is to prove part (iv). Suppose that $|S|=k$ and 
$S=\{x_1<x_2<\dots<x_k\}$.  We argue by induction on $k$.  
If $|S|=k\leq b+2$, then the result is trivial. So suppose that $k\geq b+3$ 
and that part (iv) of Theorem \ref{thmgeneral}  is true up to $k-1$.

Assume, first, that $\langle S \rangle$ is abelian. 
If $b=1$, then
$|S^2|=3k-2$ and by Theorem \ref{3k-2} either $|S|=4$ or $\langle S \rangle$  
is at most $3$-generated, as required since $b+2=3$. So assume that $b\geq 2$. 
Since $k\geq b+3\geq 5$, we have $b\leq k-3$ and 
$$|S^2|\leq 4k-6< 5k-10.$$ But $5k-10=(4+1)k-4(4+1)/2$,
so by Theorem \ref{ck}  $\langle S \rangle$ is at most $4$-generated, as
required since $4\leq b+2$. This conludes the proof of the abelian case.

So assume that  $\langle S \rangle$ is non-abelian and let $T=\{x_1,\dots,x_{k-1}\}$. 
If $x_ix_k,x_kx_i\in T^2$
for all $i<k-1$,  then it follows by Lemma
\ref{lemma1} that $T\leq \langle x_{k-1},x_k \rangle$ and hence  $\langle S
\rangle=
\langle x_{k-1},x_k \rangle$ is at most $2$-generated, as required since
$2<b+2$.  

Hence we may assume that $x_ix_k $ (or $x_kx_i)\notin T^2$ for some $i<k-1$.
Then, by the ordering in $G$, we have $x_ix_k,x_{k-1}x_k,x_k^2\notin T^2$,
which implies that
$$|T^2|\leq |S^2|-3=3k-3+b-3=3(k-1)-3+b.$$
Applying induction, we may conclude that either $\langle T \rangle$ is abelian
and $|T|=4$, or  $\langle T \rangle$ is at most $(b+2)$-generated.

If $x_k\in  \langle T \rangle$, then  $\langle T \rangle= \langle S \rangle$
and $\langle T \rangle$ is non-abelian. Hence
$\langle T \rangle$ is at most $(b+2)$-generated and so is $\langle S \rangle$,
as required.

So assume that $x_k\notin \langle T \rangle$. Then $x_kx_i,x_ix_k\notin
T^2$ for all $i\leq k-1$ and $x_k^2\notin T^2$ by the ordering in $G$.
Thus
$$S^2=T^2\dot\cup (x_kT \cup Tx_k) \dot\cup \{x_k^2\} $$
and since $|x_kT \cup Tx_k|\geq k-1$, it follows that
$$|T^2|\leq |S^2|-k\leq 3k-3+b-(b+3)=3(k-1)-3.$$
Hence, by Theorem 1.3 in \cite{FHLM}, $\langle T \rangle$ is abelian
and since $\langle S \rangle$ is non-abelian, it follows that $x_k\notin C_G(T)$.
Consequently, by Corollary 1.4 in \cite{FHLM}, $|x_kT \cup Tx_k|\geq k$,
which implies that 
$$|T^2|\leq |S^2|-(k+1)\leq 3k-3+b-(b+4)= 3(k-1)-4.$$
Hence, by Proposition 3.1 in \cite{FHLM},  $\langle T \rangle$ is at most
$2$-generated, so $\langle S \rangle$ is at most $3$-generated, as required
since $3\leq b+2$.

The proof of Theorem \ref{thmgeneral}  is now complete.
\end{proof}


\section{The structure of $\langle S \rangle$ if $|S^2| = 3|S|-2$: cardinality 3}
\label{sec4card3}

In Theorem \ref{3k-2general} we assume that $|S|\geq 4$. However, for the
inductive proof of that theorem, we need some 
special results concerning the case when $|S|=3$. These results are proved in the
next two propositions.

\begin{Proposition}
\label{prop3zs}
Let $G$ be an ordered group. Let $x_1, x_2, x_3$ be elements of $G$, 
such that $x_1 < x_2 < x_3$ and let $S = \{x_1, x_2, x_3\}$.
Assume that $\langle S \rangle$ is non-abelian and either $x_1x_2 = x_2x_1$ 
or $x_2x_3 = x_3x_2$.  

Then $|S^2| = 7$ if and only if one of the following holds:
\begin{itemize}
\item[(i)] $S \cap Z(\langle S \rangle) \not= \emptyset$,\ or
\item[(ii)] $S$ is of the form $\{a, a^b, b\}$, where $aa^b = a^ba$.
\end{itemize}
\end{Proposition}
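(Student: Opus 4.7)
The plan is to enumerate the nine products in $S^2$ directly, exploit compatibility of the order with multiplication to force pairwise strict inequalities, and then analyse a short list of candidate coincidences.

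Since the map $S\mapsto S^{-1}=\{x_3^{-1},x_2^{-1},x_1^{-1}\}$ preserves $|S^2|$ and $Z(\langle S\rangle)$, maps form (ii) to itself (after relabelling $a\mapsto (a^{-1})^b$ and $b\mapsto b^{-1}$), and swaps the two alternatives of the hypothesis, I may assume throughout that $x_1x_2=x_2x_1$. The forced inequalities $x_1^2<x_1x_2<x_1x_3<x_2x_3<x_3^2$, $x_1^2<x_2x_1<x_3x_1<x_3x_2<x_3^2$, and $x_1x_2<x_2^2<\min(x_2x_3,x_3x_2)$ show that the nine products yield at most eight distinct values. Inspecting the remaining pairwise comparisons, the coincidences not excluded by the order are exactly the following six:
\[
x_1x_3=x_3x_1,\quad x_1x_3=x_2^2,\quad x_3x_1=x_2^2,
\]
\[
x_2x_3=x_3x_2,\quad x_1x_3=x_3x_2,\quad x_3x_1=x_2x_3.
\]
Thus $|S^2|=7$ is equivalent to exactly one of these coincidences being realised in isolation (without triggering any of the others).

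A key auxiliary fact, used repeatedly below, is that in an ordered group one has $y^2x=xy^2\Longrightarrow yx=xy$: if $yx<xy$, multiplying on each side by $y$ yields $y^2x<yxy<xy^2$, a contradiction, and the reverse inequality is symmetric. Using this, I first rule out the two coincidences involving $x_2^2$: each one, combined with $x_1x_2=x_2x_1$, gives $x_3\in\langle x_1,x_2\rangle$, so $\langle S\rangle$ is abelian, contradicting the hypothesis. For the four remaining candidates, the $\binom{4}{2}=6$ pairwise conjunctions are dispatched by short manipulations: most pairs produce $x_1=x_2$, $x_2=x_3$, or an abelian $\langle S\rangle$, while the pair $\{x_1x_3=x_3x_2,\;x_3x_1=x_2x_3\}$ yields $x_3^2x_2=x_2x_3^2$, and the auxiliary fact reduces it to $x_2x_3=x_3x_2$, which has already been handled.

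It then remains to identify each of the four surviving coincidences with conclusion (i) or (ii). The cases $x_1x_3=x_3x_1$ and $x_2x_3=x_3x_2$ place $x_1$ and $x_2$ respectively in $Z(\langle S\rangle)$, giving (i). The cases $x_1x_3=x_3x_2$ and $x_3x_1=x_2x_3$ amount to $x_2=x_1^{x_3}$ and $x_1=x_2^{x_3}$ respectively, and then the hypothesis $x_1x_2=x_2x_1$ reads $aa^b=a^ba$ with $(a,b)=(x_1,x_3)$ or $(x_2,x_3)$, giving (ii). The converse direction is a direct computation: from (i) or (ii) one identifies the two equalities among the nine products that must hold, and the same auxiliary fact together with the non-abelian hypothesis excludes any additional coincidence. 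The main obstacle is organisational -- the matrix of pairwise interactions among the six candidate coincidences is extensive -- but each individual verification is a short and elementary calculation.
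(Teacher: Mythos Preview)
Your proof is correct and follows essentially the same approach as the paper: reduce to the case $x_1x_2=x_2x_1$ by a symmetry, then analyse which of the remaining products can coincide. The differences are purely organizational. The paper uses order reversal (relabel $x_i\mapsto x_{4-i}$ under the opposite order) rather than your inversion $S\mapsto S^{-1}$, and then proceeds by a short case split on whether $x_2x_3=x_3x_2$, $x_1x_3=x_3x_1$, or neither, with the last case further split by the sign of $x_1x_3$ versus $x_3x_1$; this lets the paper read off $x_3x_1=x_2x_3$ (or its mirror) directly as the missing element of $S^2$. Your version instead lists all six order-compatible coincidences up front, eliminates the two involving $x_2^2$, checks the $\binom{4}{2}$ pairwise incompatibilities, and then matches each survivor to (i) or (ii). Both routes arrive at the same four cases; yours is more bookkeeping-heavy but also makes the converse direction transparent, since the pairwise incompatibility check is exactly what is needed to certify that (i) or (ii) forces no further coincidence.
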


\begin{proof} 
Suppose that $|S^2| = 7$ . Assume first that
$x_1x_2 = x_2x_1$. The other case ($x_2x_3 = x_3x_2$) follows by reversing the
ordering in $G$.

If either $x_2x_3 = x_3x_2$ or $x_3x_1 = x_1x_3$, then either
$x_2 \in Z(\langle S \rangle)$ or $x_1 \in Z(\langle S \rangle)$,  
respectively, and (i) holds. 

So suppose that $x_2x_3 \not = x_3x_2$ and  $x_3x_1 \not = x_1x_3$. It
follows that  $x_3 \notin \langle x_1, x_2 \rangle$. 

If $x_1x_3 < x_3x_1$, then $x_1x_3 \not= x_2x_3$, $x_1x_3 
\not= x_3x_2$, $x_1x_3 \not = x_3^2$ and 
$$
S^2 = \{x_1^2, x_1x_2, x_2^2, x_2x_3, x_3x_2, x_3^2, x_1x_3\}.
$$ 
Hence $x_3x_1 = x_2x_3$ and  $x_1 = x_2^{x_3}$. Thus (ii) follows by taking 
$a=x_2$ and $b=x_3$, since then $x_1 = a^b$. 
Similar arguments yield also (ii) if $x_1x_3>x_3x_1$.

Conversely, if either (i) or (ii) holds, then it is easy to verify that $|S^2| = 7$.

\end{proof}

\begin{Proposition}
\label{prop5}
Let $G$ be an ordered group. Let $x_1, x_2, x_3$ be elements of $G$, 
such that $x_1 < x_2 < x_3$ and 
let $S = \{x_1, x_2, x_3\}$. Assume that both $x_1x_2 \not= x_2x_1$ 
and $x_2x_3 \not = x_3x_2$ (in particular $\langle S \rangle$ is non abelian)
and $|S^2| = 7$.  Then $S$ is of one of the following forms:
\begin{itemize}
\item[(a)] Either $\{x, xc, xc^x\}$ or $\{x^{-1}, x^{-1}c, x^{-1}c^x\}$ 
for some  $c \in G^{\prime}$ satisfying $c > 1$, with $c^{x^2} = cc^x$ 
and $cc^x = c^xc$,
\item[(b)] either $ \{x, xc, xcc^x\}$ or $\{x^{-1}, x^{-1}c, x^{-1}cc^x\}$ 
for some $c \in G^{\prime}$ satisfying $c > 1$, with $c^{x^2} = cc^x$ 
and $cc^x = c^xc$,
\item[(c)] $\{x, xc, xc^2\}$ for some $c \in G^{\prime}$ satisfying $c > 1$, 
with either $c^x = c^2$ or $(c^2)^x = c$.
\end{itemize}

Moreover, in cases (a) and (b),  one has
$$
\langle S \rangle = \langle a, b \rangle  
\quad \text{ with } \quad a^{b^2} = aa^b\quad \text{and} \quad aa^b=a^ba
$$
and $\langle S \rangle$ is young of type (iv), while in case (c)
$$
\langle S \rangle = \langle a, b \rangle, \quad \text{ with } \quad a^b = a^2
$$ 
and $\langle S \rangle$ is young of type (iii).
\end{Proposition}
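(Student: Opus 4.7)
The plan is to analyse exhaustively how the $9$ products $x_ix_j$ can realise only $7$ distinct values in $S^2$, and then read off the structure of $S$ from each admissible coincidence pattern.

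First, using only the order on $G$ together with the non-commutation hypotheses, one obtains the strict chain
$$
x_1^2 < x_1x_2,\, x_2x_1 < x_2^2 < x_2x_3,\, x_3x_2 < x_3^2,
$$
whose seven entries are pairwise distinct, while $x_1x_3$ lies strictly in $(x_1x_2,x_2x_3)$ and $x_3x_1$ lies strictly in $(x_2x_1,x_3x_2)$. The condition $|S^2|=7$ therefore forces exactly two coincidences, each involving $x_1x_3$ or $x_3x_1$. Writing $x_1=x$, $x_2=xa$, $x_3=xb$, every admissible pattern translates into two equations in $a$, $b$ and their conjugates by $x$; several patterns are ruled out immediately by the ordering (for instance, $x_1x_3=x_2x_1$ together with $x_3x_1=x_1x_2$ would require simultaneously $a^x>a$ and $a>a^x$).

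The diagonal pattern $x_1x_3=x_3x_1=x_2^2$ deserves separate treatment: the first equality gives $[x_1,x_3]=1$, i.e.\ $x$ commutes with $b$, and the second gives $b=a^xa$. Substituting the resulting identity $b^x=b$ yields, after multiplying out, $(xa)^2=(ax)^2$. Since in an ordered group $u^2=v^2$ implies $u=v$ (writing $w=v^{-1}u$, one finds $w^v=w^{-1}$, which forces $w=1$), this gives $xa=ax$, contradicting $[a,x]\neq 1$. Hence the diagonal pattern is excluded, and the remaining patterns fall, up to reversing the order on $G$, into three families: (I) $b=a^x$ with $a^{x^2}=a^xa$; (II) $b=a^xa$ with $a^{x^2}=a^xa$; and (III) $b=a^2$ with $a^x=a^2$ or $(a^2)^x=a$.

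The principal obstacle is showing that in families (I) and (II) the relation $a^{x^2}=a^xa$ forces $[a,a^x]=1$. Setting $d=[a,a^x]$ so that $aa^x=a^xa\cdot d$, and using the relation to rewrite this as $aa^x=a^{x^2}\cdot d$, one computes $a^{x^3}$ in two ways. First, conjugating $aa^x=a^{x^2}d$ by $x$ gives $(aa^x)^x=a^xa^{x^2}=(a^x)^2a$ on one hand and $a^{x^3}d^x$ on the other, hence $a^{x^3}=(a^x)^2a\cdot(d^x)^{-1}$. Second, iterating the relation directly yields $a^{x^3}=a^{x^2}a^x=a^x\cdot(aa^x)=a^x\cdot a^{x^2}d=(a^x)^2a\cdot d$. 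Comparing the two expressions gives $d=(d^x)^{-1}$, that is, $d\cdot d^x=1$. In an ordered group, $d$ and $d^x$ lie in the same positive or negative cone, so $dd^x=1$ forces $d=1$, and $\langle a,a^x\rangle$ is abelian.

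Having established $[a,a^x]=1$, the rewritings $a^xa=aa^x$ in families (I) and (II), and $a^x=a^2$ (or $(a^2)^x=a$) in family (III), identify $S$ with forms (a), (b), (c) of the statement, taking $c=a$; the alternative $\{x^{-1},x^{-1}c,x^{-1}c^x\}$ in (a) and (b) corresponds to the mirror subcases obtained by inverting the order on $G$ (equivalently, by passing to $S^{-1}$). The \emph{moreover} statement is then immediate: in cases (a) and (b), $\langle S\rangle=\langle a,x\rangle$ satisfies $a^{x^2}=aa^x$ and $[a,a^x]=1$, which, after renaming $b=x$, matches exactly the definition of an ordered young group of type (iv); in case (c), $\langle S\rangle=\langle a,x\rangle$ satisfies $a^x=a^2$ (or $(a^2)^x=a$), making it a young group of type (iii), i.e.\ a quotient of $B(1,2)$.
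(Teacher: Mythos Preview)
Your overall strategy—enumerate the admissible coincidence patterns for $x_1x_3$ and $x_3x_1$, translate each into relations on $a=x_1^{-1}x_2$, and then prove $[a,a^x]=1$—parallels the paper's approach. Your treatment of the diagonal case $x_1x_3=x_3x_1=x_2^2$ via the ordered-group fact $u^2=v^2\Rightarrow u=v$ is clean, and your commutator argument for family~(I), computing $a^{x^3}$ in two ways to force $d\,d^x=1$, is correct and differs pleasantly from the paper's manipulation of $[c,c^x]^x=[c^x,c^{x^2}]$.

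There is, however, a genuine gap in family~(II). From $x_1x_3=x_2^2$ and $x_3x_1=x_2x_3$ one obtains $b=a^xa$ and $b^x=a^xb$. Substituting gives $b^x=(a^xa)^x=a^{x^2}a^x$ on one hand and $a^xb=(a^x)^2a$ on the other, so the relation that actually holds is
\[
a^{x^2}a^x=(a^x)^2a,
\]
\emph{not} $a^{x^2}=a^xa$ as you write. The two coincide only after $[a,a^x]=1$ is established, so your commutator computation—built specifically on the relation $a^{x^2}=a^xa$—does not apply here as written. The paper handles this case by a separate calculation: from $c^{x^2}=(c^x)^2c(c^x)^{-1}$ it deduces $[c,c^x]^{xc^x}=[c,c^x]^{-1}$, which in an ordered group forces $[c,c^x]=1$. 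Alternatively, and more in the spirit of your write-up, observe that the pattern $(x_1x_3=x_2^2,\ x_3x_1=x_2x_3)$ becomes, after reversing the order and relabelling $y_i=x_{4-i}$, exactly the family-(I) pattern $(y_1y_3=y_2y_1,\ y_3y_1=y_2^2)$; thus your family-(I) argument, applied in the reversed order, yields commutativity for family~(II) as well. Either fix is short, but one of them is needed to close the argument.
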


\begin{proof} 

Write $T = \{x_1, x_2\}$, then $S^2 = T^2 \dot\cup \{x_2x_3, x_3x_2, x_3^2\}$. 

First suppose  that $x_1x_3 \leq x_3x_1$. Then 
$x_1x_3 \notin \{x_2x_3, x_3x_2, x_3^2\}$, and 
we have either $x_1x_3 = x_2^2$ or $x_1x_3 = x_2x_1$. We distinguish 
between three cases.
\medskip

\noindent Case 1:  $x_1x_3 = x_2^2$. 

In this case $x_1x_3 < x_3x_1$, since otherwise $x_1 \in C_G(x_3)$, 
$[x_1, x_2^2] = 1$ and then $[x_1, x_2] = 1$, a contradiction. 
Hence 
\begin{equation}
\label{eq31}
x_3x_1 = x_2x_3.
\end{equation} 
Write $x_1 = x$. Then $x_2 = xc$ for some $c \in \langle S \rangle^{'}$,
with $c>1$. Moreover, since $x_1x_3 = x_2^2$, we obtain $x_3 = cxc = xc^xc$. Hence, by \eqref{eq31}, 
$cxcx = xccxc$, so  $c^xcx = c^2xc$ and $c^{x^2}c^x = (c^x)^2c$.  Thus 
$$
[c, c^x]^x = [c^x, c^{x^2}] = [c^x, (c^x)^2c(c^x)^{-1}] = [c^x, c]^{(c^x)^{-1}},
$$
yielding $[c, c^x]^{xc^x} = [c^x, c]= [c, c^x]^{-1}$. Since $G$ is an
ordered group, it follows that $[c, c^x] = 1$. Thus $c^{x^{2}} = cc^x$ 
and (b) holds.
\medskip

\noindent Case 2:  $x_1x_3 = x_2x_1$ and $x_3x_1 \in T^2$. 

In this case $x_3=x_2^{x_1}$ and $x_3x_1 \in \{x_1x_2, x_2^2\}$.  
If $x_3x_1 = x_1x_2$, then 
$x_2 = x_3^{x_1}=x_2^{x_1^2}$, so $[x_1^2, x_2] = 1$ and hence 
$[x_1, x_2] = 1$, a contradiction.  So we may suppose that 
\begin{equation}
\label{eq655} 
x_3x_1 = x_2^2.
\end{equation} 
Hence $(x_3x_1)^{-1}x_1x_3=x_2^{-2}x_2x_1=x_2^{-1}x_1$ and $x_2=x_1c$
for some $c \in \langle S \rangle^{'}$ with $c>1$. Moreover, $x_3=x_2^{x_1}
= x_1c^{x_1}$. Write $x_1 = x$; then $x_2 = xc$ and $x_3 = xc^x$.
Moreover $xc^xx = xcxc$, by \eqref{eq655}, hence $c^{x^2} = c^xc$, 
and arguing as before $[c, c^x] = 1$ and (a) holds.
\medskip

\noindent Case 3:  $x_1x_3 = x_2x_1$ and $x_3x_1 \notin T^2$. 

In this case 
$$  
x_1x_3 = x_2x_1, \quad \text{ and } \quad x_3x_1 = x_2x_3.
$$ 
Write $x_1 = x$. Then  $x_2 = xc$ 
for some $c \in \langle S \rangle^{'}$ with $c>1$, $x_3 = xc^x$, 
and $xc^xx = xc xc^x$. Then $c^{x^2}  = c^xc^x$, hence $c^x = c^2$ and (c) holds.
\medskip

We argue similarly if $x_3x_1 \leq x_1x_3$. In this case 
$x_3x_1 \notin \{x_2x_3,  x_3x_2, x_3^2\}$. 
If $x_3x_1 = x_2^2$, then as before $x_1x_3\neq x_3x_1$ and $x_1x_3 = x_3x_2$.
Setting $x_1 = x^{-1}$, $x_2 = x^{-1}c$ for some $c \in \langle S \rangle^{'}$
with $c>1$,  
we obtain that $x_3 = x^{-1}cc^x$ and we see that  $cc^x = c^xc$ and (b) holds. 
If $x_3x_1 = x_1x_2$ and $x_1x_3 = x_3x_2$, then with $x_1 = x^{-1}$ 
we obtain that (c) holds.
If $x_3x_1 = x_1x_2$ and $x_1x_3 = x_2^2$, then with $x_1 = x^{-1}$, 
$x_2 =  x^{-1}c$ for some $c \in \langle S \rangle ^{'}$ with $c>1$, 
we obtain $x_3 = x^{-1}c^x$ and we see that (a) holds.
\end{proof}


\section{The structure of $\langle S \rangle$ if $|S^2| = 3|S|-2$: two general lemmas}
\label{sec4lemgen}

In this section we present two general lemmas which will be useful in the 
inductive process entering the proof of Theorem \ref{3k-2general}, 
as well as in the study of the special case $|S|=4$ (in the next section).

\begin{Lemma}
\label{lemgen2-3}
Let $G$ be an ordered group. Let $S$ be a finite subset of $G$ with 
at least two elements. 
Let either $m= \max S$ or $m= \min S$ and $T= S \setminus \{ m \}$.
Then either $\langle S \rangle$ is a 2-generated abelian group, or $|T^2| \leq |S^2|-3$.
\end{Lemma}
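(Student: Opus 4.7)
I will focus on the case $m=\max S$; the case $m=\min S$ reduces to this one by reversing the ordering on $G$ (which is still an ordered group). Write $t:=\max T$, the second-largest element of $S$. My strategy is to exhibit explicit elements of $S^2\setminus T^2$ and, if fewer than three are available, use Lemma~\ref{lemma1} to pin down the structure of $\langle S\rangle$.

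First I would note that any $t_1t_2\in T^2$ satisfies $t_1t_2\le t^2$, whereas the order axiom applied to $m>t$ gives $m^2>mt>t^2$ and $m^2>tm>t^2$. Hence $\{m^2,mt,tm\}\subseteq S^2\setminus T^2$, and these three elements are pairwise distinct provided $mt\ne tm$, yielding $|T^2|\le|S^2|-3$ in that case.

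It remains to treat $mt=tm$. If $|S|=2$ then $\langle S\rangle=\langle m,t\rangle$ is already 2-generated and abelian and we are done. For $|S|\ge 3$ I would argue by contrapositive: assume $|T^2|>|S^2|-3$, so that $S^2\setminus T^2=\{m^2,mt\}$. For every $x\in T\setminus\{t\}$ one has $x<t<m$, whence $mx,xm<mt<m^2$, and the products $mx,xm\in S^2$ cannot belong to $\{m^2,mt\}$, so they must lie in $T^2$.

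Finally I would apply Lemma~\ref{lemma1} with the given $T$, its maximum $t$, and $m$ playing the role of the element above $t$: the hypotheses are exactly what the previous paragraph establishes. The lemma yields $T\subseteq\langle t,m\rangle$, so $\langle S\rangle=\langle t,m\rangle$ is 2-generated, and abelian because $tm=mt$. The only delicate point, really the one substantive idea in the argument, is recognising that the residual case $|S^2\setminus T^2|\le 2$ is tailor-made for Lemma~\ref{lemma1}; the remaining verifications are just careful bookkeeping with the order axiom.
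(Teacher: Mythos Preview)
Your proof is correct and follows essentially the same approach as the paper's: identify $m^2$, $mt$, $tm$ as elements of $S^2\setminus T^2$, dispose of the case $mt\neq tm$, and in the commuting case invoke Lemma~\ref{lemma1} to force $\langle S\rangle=\langle t,m\rangle$. The only cosmetic difference is that the paper argues by a direct dichotomy (either all $xm,mx$ with $x\in T\setminus\{t\}$ lie in $T^2$, or some such product gives a third element of $S^2\setminus T^2$), whereas you phrase the same split as a contrapositive.
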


\begin{proof} 
Write $k=|S| \geq 2$ and let $S = \{x_1, x_2, \cdots, x_k\}$, 
with $x_1 < x_2 < \cdots < x_k$. Moreover, suppose that 
$T = \{x_1, \cdots, x_{k-1}\}$. 

Obviously $x_k^2, x_{k-1}x_k, x_kx_{k-1} \notin T^2$, because of the ordering. 
If $x_kx_{k-1} \not= x_{k-1}x_k$, then the result holds immediately. 
Therefore suppose that $x_kx_{k-1} = x_{k-1}x_k$. 

If $x_ix_k, x_kx_i \in T^2$ for each $i < k-1$, then, by Lemma \ref{lemma1}, 
$T \subseteq \langle x_{k-1}, x_k \rangle$. Thus 
$\langle S \rangle = \langle x_{k-1}, x_k \rangle$ and 
hence $\langle S \rangle$ is 2-generated and abelian, as required. 

If there exists $j < k-1$ such that either $x_jx_{k} \notin T^2$ 
or $x_{k}x_j \notin T^2$, then 
$|T^2| \leq |S^2|-3$, since $x_jx_{k}$ and $ x_{k}x_j$ are both 
less than $x_{k-1}x_k = x_kx_{k-1}$, as required.

If $T=\{x_2,\dots,x_k\}$, then the result follows from the previous arguments
by reversing the ordering in $G$.
\end{proof}

Now, we study the case when $S = T \dot\cup \{y\}$, where $\langle T \rangle$ is abelian.

\begin{Proposition}
\label{prop3}
Let $G$ be an ordered group and let $T$ be a finite subset of $G$ such that $|T| \geq 3$ and $\langle T \rangle$ is abelian. 
Let $y \in G \setminus T$. Define $S = T \dot\cup \{y\}$ and assume that $|S^2| = 3|S|-2$. 
Then either $\langle S \rangle$ is abelian, or there are elements $a,c\in G$ such that $S$ is of the form
$$
S = \{a, ac, \dots, ac^{k-2}, y \}
$$ 
where $k=|S|$ and one of the following holds: 
\begin{itemize}
\item[(a)] $[a, y] = c$ or  $[y,a] = c, [c,y] = [c, a] = 1$, 
\item[(b)] $ [a, y] = c$ or $[a,y] = 1, [c, a] = 1, (c^2)^y = c, |S| = 4$
\item[(c)] $[a, y] = 1$ or  $[y,a] = c^2, [c, a] = 1, c^y = c^2, |S| = 4.$
\end{itemize}
In particular, $\langle S \rangle$ is either abelian or young of type (i) or (ii).
If it is  young of type (ii), then $|S|=4$.
\end{Proposition}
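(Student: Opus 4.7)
\medskip
\noindent\textbf{Proof plan for Proposition \ref{prop3}.}
If $\langle S \rangle$ is abelian there is nothing to prove, so assume $\langle S \rangle$ is non-abelian. Then necessarily $y \notin \langle T \rangle$ (otherwise $\langle S \rangle \subseteq \langle T \rangle$ would be abelian), which forces $yT$ and $Ty$ to be disjoint from $T^2$: indeed, $yt = t_1t_2$ would give $y = t_1t_2t^{-1} \in \langle T \rangle$. Writing
$$
S^2 = T^2 \cup yT \cup Ty \cup \{y^2\},
$$
and using $|yT|=|Ty|=|T|$ together with the hypothesis $|S^2| = 3|S|-2 = 3|T|+1$ and the ordered-group lower bound $|T^2| \geq 2|T|-1$ of Theorem \ref{thmgeneral}(i), one pins down $|T^2|$ to a narrow range (essentially $2|T|-1 \leq |T^2| \leq 2|T|+1$) and concludes that $|yT \cap Ty| \geq |T|-2$. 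Concretely, conjugation by $y$ sends almost all of $T$ back into $T$.

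\medskip
\noindent The second step identifies $T$ as a single geometric progression. The small value of $|T^2|$ triggers Theorems \ref{3k-3} and \ref{3k-2} applied to $T$ inside the abelian group $\langle T \rangle$, leaving only a few candidate structures for $T$: geometric progressions, unions of two progressions with the same ratio, and small sporadic sets. Exploiting the identity linking $|S^2|$, $|T^2|$ and $|yT \cup Ty|$ together with the total order on $G$, one rules out every possibility except the single geometric progression
$$
T = \{a, ac, ac^2, \dots, ac^{k-2}\},
$$
for suitable $a \in G$ and $c \in \langle T \rangle^{'}$ with $c > 1$, after possibly reversing the order of $G$ as in Proposition \ref{prop5}.

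\medskip
\noindent The third step determines the action of $y$. Since almost every element of $T$ is conjugated by $y$ back into $T$, a short computation using two good indices shows that $a^y, c^y \in \langle T \rangle$, and writing $a^y = a c^r$ and $c^y = c^m$, one finds that $(ac^i)^y = a c^{r+mi}$ must lie in $\{a, ac, \dots, ac^{k-2}\}$ for all but at most one $i$. This severely constrains $m$. Crucially, the ordered-group property supplies unique square roots ($x^2 = y^2$ implies $x=y$), which rules out $m=-1$: an action $c^y = c^{-1}$ would give $(cy)^2 = y^2$ and hence $c = 1$, contradicting $c > 1$. For $|T| \geq 4$ only $m=1$ survives, yielding $[c, y] = 1$ and $[a,y] = c^{r}$, which is case (a) and makes $\langle S \rangle$ young of type (i). For $|T|=3$ (i.e.\ $|S|=4$) the shorter progression additionally admits $m=2$ (case (c), $c^y = c^2$) and the reciprocal action $(c^2)^y = c$ (case (b)); in both a direct check yields the decomposition $\langle S \rangle = \langle a \rangle \times \langle c, y \rangle$, young of type (ii). The ``or'' alternatives on $[a, y]$ in cases (b) and (c) correspond to the two admissible values of the translation parameter $r$ compatible with the prescribed action of $y$ on $c$.

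\medskip
\noindent The main obstacle is the combination of steps two and three: proving that $T$ is exactly the claimed geometric progression (rather than being strictly contained in a longer one, forming a proper union of two, or exhibiting Freiman dimension two), and then excluding all conjugation parameters $m \notin \{1\}$ when $|T| \geq 4$ and $m \notin \{1, 2, \tfrac{1}{2}\}$ when $|T|=3$. Both rely on the exact equality $|S^2|=3|S|-2$ (not an inequality) and on the rigidity furnished by the total order of $G$, in particular the location of $y$ relative to $\min T$ and $\max T$. Once these structural statements are in place, identifying cases (a)--(c) and matching them to young types (i) and (ii) is routine.
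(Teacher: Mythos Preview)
Your overall strategy matches the paper's, but there is a real gap in step two, and step three is shakier than you indicate.

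The paper does \emph{not} invoke Theorems \ref{3k-3} or \ref{3k-2} here. Those theorems carry size hypotheses ($|T|\ge 11$ or $12$) that fail for small $T$, and in any case they only conclude that $T$ is \emph{contained in} a progression (or a union of two), not that $T$ \emph{equals} one. Instead the paper uses Proposition~2.4 of \cite{FHLM}: since $y\notin C_G(T)$ (equivalent to $\langle S\rangle$ being non-abelian), one has $|yT\cup Ty|\ge |T|+1=k$. Together with $y^2\notin T^2$ (because $y^2\in T^2$ would give $[y^2,t]=1$, hence $[y,t]=1$ for all $t\in T$), this forces $|T^2|\le 2|T|-1$, so $|T^2|=2|T|-1$ \emph{exactly}. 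Corollary~1.4 of \cite{FHLM} then gives that $T$ is a full geometric progression $\{a,ac,\dots,ac^{k-2}\}$ with $[a,c]=1$: no sporadic cases, no unions of two progressions, no Freiman-dimension analysis. Your route through the $3k{-}3$ and $3k{-}2$ theorems cannot reach this conclusion for small $|T|$, and even for large $|T|$ it leaves you with precisely the extra cases you flag as the ``main obstacle''. (Incidentally, writing $c\in\langle T\rangle'$ is a slip: $\langle T\rangle$ is abelian, so its derived subgroup is trivial; you mean $[a,c]=1$.)

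For step three, the exact equality $|yT\cup Ty|=k$ now obtained is what drives the paper's argument: it lists the $k$ elements of $yT\cup Ty$ in increasing order according to the three cases $ya<ay$, $ya>ay$, $ya=ay$, and then locates $yac$ and $yac^2$ among them by a simple count. This is elementary and sidesteps your parametric claim ``$c^y=c^m$ for some integer $m$'', which is not immediate: from two relations $(ac^i)^y=ac^{i'}$ and $(ac^j)^y=ac^{j'}$ you get only $(c^{j-i})^y=c^{j'-i'}$, and in an ordered group this does not by itself force $c^y\in\langle c\rangle$ unless $|j-i|=1$. That can be repaired (with $|yT\cap Ty|=|T|-1\ge 2$ one can find consecutive good indices), but the paper's order-based enumeration is both cleaner and what actually produces the sharp split between $|S|=4$ (cases (b),(c)) and $|S|\ge 5$ (case (a)).
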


\begin{proof}
If $y \in C_G(T)$, then $\langle S \rangle$ is abelian, as required. 
So we may assume that $y \notin C_G(T)$.
Then $|yT \cup Ty| \geq k$ by Proposition 2.4 of \cite{FHLM}. 

We have
$$
y^2 \not\in T^2			
$$
since otherwise,
for any $t \in T$,  $[y^2, t] = 1$ and hence $[y,t] = 1$, thus $y \in C_G(T)$, 
which is not the case. 

Similarly, 
$$
(yT \cup Ty)\cap T^2 = \emptyset.
$$

Hence 
$$
S^2 = T^2 \dot\cup (yT \cup Ty) \dot\cup \{y^2\},
$$ 
and therefore 
$$
|T^2| \leqslant |S^2|-k-1 = 3k-2-k-1 = 2k-3 = 2|T|-1.
$$ 
But by Theorem 1.1 of \cite{FHLM}, it is also true that $|T^2| \geq 2|T|-1$, 
hence $|T^2| = 2|T|-1$, and 
by Corollary 1.4 of \cite{FHLM}, $T$ is of the form 
$$
T = \{a, ac, ac^2, \cdots, ac^{k-2}\},
$$ 
where $ac = ca$, and we may assume that $c >1$. Moreover $|yT \cup Ty| = k$. 
We consider now three cases.
\medskip

\noindent Case 1: $ya < ay$. 

In this case 
$$
yT \cup Ty = \{ya, ay, acy, ac^2y, \cdots, ac^{k-2}y\},
$$ 
with $ya < ay < acy < ac^2y< \cdots < ac^{k-2}y$. 
Now consider the elements $yac, yac^2 \in yT \cup Ty$. Since $yac < yac^2<
yac^3< \cdots <yac^{k-2}$, $yac$ is less than $k-2-1$ elements of $yT \cup Ty$
and $yac^2$ is less than $k-4$ elements of $yT \cup Ty$. 
Thus either $yac = ay$ or $yac = acy$. 

If $yac = acy$, then the only possibility is that $yac^2 = ac^2y$. 
But then $1 = [y, ac] = [y, ac^2]$, yielding $[y, c] = 1 = [y,a]$, a contradiction. 

Now suppose that $yac = ay$, which implies $[a,y] = c $. In this case
either $yac^2 = acy$ or $yac^2 = ac^2y$. 

If $yac^2 = acy$, 
then $ayc = acy$ and $[c,y] = 1$. Thus the derived group 
$\langle S \rangle ^{\prime} = \langle c \rangle \leq Z(\langle S \rangle)$, 
$\langle S \rangle$ is of class 2, and (a) holds.
 
If $yac^2 = ac^2y$, then $ayc = ac^2y$ and $c=(c^2)^y$. 
Moreover, in this case 
$k = 4$. Indeed, if $k>4$, then $yac^3 \in yT \cup Ty$ and 
the only possibility is $yac^3 = ac^3y$. But then  $yac^3 =ayc^2=ac^3y$ 
and $c^2=(c^3)^y=(c^2)^yc^y=cc^y$, yielding $cy = yc$. Thus $yac^2 = ayc^2$,
in contradiction to our assymption that $[a,y]=c$ and $c>1$. 
Therefore $k=4$ and (b) holds.
\medskip

\noindent Case 2: $ay < ya$. 

We argue similarly and obtain that either $[y, a] = c$,  $[c,y] = 1$ and $(a)$ holds, or $[y,a] = c^2$, $c^y = c^2$, 
$|S| = 4$ and (c) holds. 
\medskip

\noindent Case 3: $ay = ya$. 

We have $yac \neq acy$, since otherwise
$[y,a] = 1 = [y,c]$ and $y \in C_G(T)$, a contradiction. 

Assume first $yac < acy$. 
Then 
$$
yT \cup Ty = \{ya = ay, yac, acy, ac^2y, \cdots, ac^{k-2}y\},
$$ 
with $ya = ay < yac < acy < ac^2y<\cdots < ac^{k-2}y$. Consider the element
$yac^2$. Then $yac^2 \in yT \cup Ty$ and
$yac < yac^2< yac^3 < \cdots <yac^{k-2}$, so, as before, we may conclude 
that either  $yac^2 = acy$ or $yac^2 = ac^2y$.
 
If $yac^2 = acy$, then $ya = ay$ implies that $yc^2 = cy$. Thus $c^y = c^2$. 
Moreover, in this case 
$|S| = 4$, since otherwise $yac^3 \in T^2$ and either $yac^3 = ac^2y$, $yc^3 = c^2y$ 
yielding the contradiction $c^3 = (c^2)^y = c^4$, 
or $yac^3 = c^3ay$ yielding the contradiction $cy = yc$.  Therefore (c) holds. 

If $yac^2 = ac^2y$, then $ayc^2 = ac^2y$ and $[c^2, y] = 1$. But then $[c,y] = 1$, 
again a contradiction.

If $acy < yac$, then arguing similarly we obtain $(c^2)^y = c$, $|S| = 4$  and (b) holds.
\medskip

We now prove the `in particular' part of our statement.

If (a) holds, then $\langle S \rangle$ is young of type (i). 

If (b) holds and $[a,y] = 1$, then  $\langle S \rangle$ is young of type (ii). 

If (b)  holds and $[a,y] = c$, then $[ac^2,y] = cc^{-1} = 1$, $(c^2)^y = c$, $[ac^2, c] = 1$  and  again $\langle S \rangle$ is young of type (ii). 

If (c) holds, and $[a,y] = 1$,  we have $\langle S \rangle = \langle a \rangle \times \langle c, y \rangle$, with $c^y = c^2$, and again  $\langle S \rangle$ is young of type (ii).

Finally, if  (c)  holds and $[y,a] = c^2$, then $[y, c] = c^{-1}$, thus $[y, ac^2] = 1$ and  again $\langle S \rangle = \langle ac^2 \rangle \times \langle c, y \rangle$ 
in a young groups of type (ii). 
\end{proof}

\section{The structure of $\langle S \rangle$ if $|S^2| = 3|S|-2$: cardinality 4}
\label{sec4card4}

In this section, we study the special case of Theorem \ref{3k-2general} when $S$ has cardinality 4 and 
prove several lemmas.

\begin{Lemma} 
\label{lemma2v}
Let $G$ be an ordered group. Let $x_1, x_2, x_3,x_4$ be elements of $G$,
such that $x_1 < x_2 < x_3<x_4$ and
let $S = \{x_1, x_2, x_3,x_4\}$. 
Suppose that $|S^2| = 10=3|S|-2$. 
If $x_3 \in Z (\langle x_2, x_3, x_4 \rangle)$, then either 
$\langle x_2, x_3, x_4 \rangle$ or $\langle x_1, x_2, x_3 \rangle$ is abelian.
\end{Lemma}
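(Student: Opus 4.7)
The plan is to argue by contradiction. Suppose both $H_1 = \langle x_1, x_2, x_3\rangle$ and $H_2 = \langle x_2, x_3, x_4\rangle$ are non-abelian, and write $T_1 = \{x_1, x_2, x_3\}$, $T_2 = \{x_2, x_3, x_4\}$. Since $x_3$ is central in $H_2$, it commutes with $x_2$, so the relation $x_2 x_3 = x_3 x_2$ holds inside $T_1$ as well; in particular the non-abelianness of $H_2$ amounts to $x_2 x_4 \neq x_4 x_2$.

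I would first establish $|T_1^2| = |T_2^2| = 7$. Indeed, $\langle S \rangle \supseteq H_1$ is non-abelian and hence not a $2$-generated abelian group, so Lemma \ref{lemgen2-3} applied with $m = \max S = x_4$ and then with $m = \min S = x_1$ yields $|T_i^2| \leq |S^2| - 3 = 7$ for $i = 1, 2$. The reverse inequality is the contrapositive of Theorem 1.3 of \cite{FHLM}: a non-abelian three-element subset of an orderable group cannot satisfy $|T^2| \leq 3|T| - 3 = 6$.

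The main step is to compute $T_1^2 \cap T_2^2$ by means of the total order. Every element of $T_1^2$ is at most $x_3^2$, and a direct inspection of the nine products of elements of $T_1$ (using $x_2 x_3 = x_3 x_2$) shows that no element of $T_1^2$ lies in the open interval $(x_2 x_3, x_3^2)$; in other words the second-largest element of $T_1^2$ is $x_2 x_3$. On the other hand, since $x_4 > x_3$, each of the four products in $T_2^2 \setminus \{x_2^2, x_2 x_3, x_3^2\}$, namely $x_3 x_4$, $x_4^2$, $x_2 x_4$ and $x_4 x_2$, is strictly larger than $x_2 x_3$. So for any such product to lie in $T_1^2$ it would have to coincide with $x_3^2$, but this is ruled out by $|T_2^2| = 7$. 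Therefore
$$
T_1^2 \cap T_2^2 = \{x_2^2,\, x_2 x_3,\, x_3^2\},
$$
and since $T_1^2 \cup T_2^2 \subseteq S^2$ we conclude
$$
|S^2| \geq |T_1^2| + |T_2^2| - |T_1^2 \cap T_2^2| = 7 + 7 - 3 = 11,
$$
contradicting $|S^2| = 10$. The main obstacle is precisely this intersection bound: it couples the ordering (which forces the ``new'' products of $T_2^2$ above $x_2 x_3$) with the established equality $|T_2^2| = 7$ (which eliminates the only remaining possible match, $e = x_3^2$); the rest of the proof is then routine counting.
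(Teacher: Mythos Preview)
Your argument is correct. It differs from the paper's proof in a pleasant way: the paper never invokes Lemma~\ref{lemgen2-3} or establishes $|T_2^2|=7$; instead it works only with $T=T_1$, assumes (after symmetry) $x_2x_4<x_4x_2$, and shows directly that neither $x_4x_2$ nor $x_2x_4$ can lie in $T^2$. The key point there is the algebraic implication that $x_4x_2=x_3^2$ (or $x_2x_4=x_3^2$) would force $x_4=x_2^{-1}x_3^2=x_3^2x_2^{-1}$ (using $[x_2,x_3]=1$), hence $x_2x_4=x_4x_2$, a contradiction. Your approach replaces this computation by the counting observation that $|T_2^2|=7$ already separates $x_2x_4$, $x_4x_2$ from $x_3^2$, and then finishes by inclusion--exclusion on $T_1^2\cup T_2^2$. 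The trade-off: you use the extra tool Lemma~\ref{lemgen2-3} (applied at both ends of $S$), while the paper stays closer to first principles; on the other hand, your route avoids any case analysis on the relative order of $x_2x_4$ and $x_4x_2$. One small remark: you use not only $x_2x_3=x_3x_2$ but also $x_3x_4=x_4x_3$ (so that $T_2^2\setminus\{x_2^2,x_2x_3,x_3^2\}$ has exactly four elements); this is of course part of the hypothesis $x_3\in Z(\langle x_2,x_3,x_4\rangle)$, but it would be worth saying so explicitly.
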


\begin{proof}
We have $x_2x_3 = x_3x_2$ and  $x_3x_4 = x_4x_3$.

If $x_2x_4 = x_4x_2$, then the result holds. 

We may therefore restrict ourselves to the case where $x_2x_4 \not= x_4x_2$,
and assume that $x_2x_4 < x_4x_2$ 
(we argue similarly in the symmetric case). 

Let $T=\{x_1,x_2,x_3\}$. We first notice that
$$
x_4x_2 \notin T^2.
$$
Indeed, if $x_4x_2 \in T^2$, then the only possibility is 
$x_4x_2 = x_3^2$. But then  $x_2x_4 = x_4x_2$,
a contradiction.

Obviously $x_2x_4 \notin \{x_4x_2, x_3x_4, x_4^2\}$. If also
$x_2x_4 \notin T^2$, then 
$$
|T^2| \leq 10-4 = 6=3|T|-3,
$$ 
and by Theorem 1.3 of \cite{FHLM}
$T$ is abelian, as required.

Thus we may assume that $x_2x_4 \in T^2$ and it follows that 
$$
x_2x_4 \in \{x_3x_1, x_3x_2, x_3^2\}.
$$ 
If $x_2x_4 = x_3x_2 = x_2x_3$, we obtain the contradiction $x_4 = x_3$. 
If $x_2x_4 = x_3x_1$, then $x_1 \in \langle x_2, x_3, x_4\rangle$, 
which implies that $x_1x_3 = x_3x_1$. But then $x_2x_4 = x_1x_3$, in contradiction
to the ordering in $S$. 
Finally, if $x_2x_4 = x_3^2$, then $x_2x_4 = x_4x_2$, again
a contradiction. 
\end{proof}

\begin{Lemma} 
\label{lem444}
Let $G$ be an ordered group. Let $x_1, x_2, x_3,x_4$ be elements of $G$
such that $x_1 < x_2 < x_3<x_4$ and
let $S = \{x_1, x_2, x_3,x_4\}$.
Suppose that $|S^2| = 10=3|S|-2$.
If $x_2x_3=x_3x_2$, then either $\langle x_2,x_3,x_4\rangle$ or 
$\langle x_1,x_2,x_3\rangle$ is abelian.
\end{Lemma}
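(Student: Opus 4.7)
The plan is to argue by contradiction: I would suppose that neither $\langle T_1\rangle=\langle x_1,x_2,x_3\rangle$ nor $\langle T_2\rangle=\langle x_2,x_3,x_4\rangle$ is abelian. First, applying Lemma~\ref{lemgen2-3} once with $m=x_4$ and once with $m=x_1$ would rule out $\langle S\rangle$ being $2$-generated abelian (which would make both $\langle T_i\rangle$ abelian), so I obtain $|T_1^2|,\,|T_2^2|\le |S^2|-3=7$; Theorem~1.3 of \cite{FHLM} then forbids $|T_i^2|\le 6=3|T_i|-3$, forcing $|T_1^2|=|T_2^2|=7$. Since the four products $x_4^2,\,x_1 x_4,\,x_2 x_4,\,x_3 x_4$ are pairwise distinct by the ordering, they contribute $4$ distinct values to $S^2$; but $|S^2|-|T_1^2|=3$, so at least one of the six mixed products $x_i x_4,\,x_4 x_j$ ($i,j\le 3$) must coincide with an element of $T_1^2$, which yields $x_4\in\langle T_1\rangle$. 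Symmetrically, $x_1\in\langle T_2\rangle$.

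Next I would apply Proposition~\ref{prop3zs} to each of $T_1$ and $T_2$, both exploiting the shared commuting pair $\{x_2,x_3\}$. Lemma~\ref{lemma2v} rules out $x_3\in Z(\langle T_2\rangle)$, and its counterpart obtained by reversing the ordering on $G$ (which swaps $T_1$ and $T_2$) rules out $x_2\in Z(\langle T_1\rangle)$; the alternatives $x_1\in Z(\langle T_1\rangle)$ and $x_4\in Z(\langle T_2\rangle)$ are immediately incompatible with the non-abelian hypothesis together with $[x_2,x_3]=1$. The surviving alternatives are therefore: for $T_1$, either $x_3\in Z(\langle T_1\rangle)$, or $T_1$ is a conjugate triple with $x_2=x_3^{x_1}$; for $T_2$, either $x_2\in Z(\langle T_2\rangle)$, or $T_2$ is a conjugate triple with $x_2=x_3^{x_4}$.

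Three of the four resulting sub-cases collapse quickly. If $x_3\in Z(\langle T_1\rangle)$, then $x_4\in\langle T_1\rangle$ forces $[x_3,x_4]=1$, which combined with the central hypothesis on $x_2$ makes $\langle T_2\rangle$ abelian, and in the conjugate-triple case for $T_2$ yields the contradiction $x_2=x_3^{x_4}=x_3$. Symmetrically, when $x_2=x_3^{x_1}$ and $x_2\in Z(\langle T_2\rangle)$, the inclusion $x_1\in\langle T_2\rangle$ gives $[x_1,x_2]=1$; a short computation using $x_2=x_3^{x_1}$ then forces $[x_1,x_3]=1$, whence $x_2=x_3$.

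The main obstacle is the remaining sub-case, in which both $T_1$ and $T_2$ are conjugate triples and $x_2=x_3^{x_1}=x_3^{x_4}$. Here I would count inside $S^2$: every element of $T_2^2$ is $\ge x_2^2$, while the three products $x_1^2,\ x_1 x_2=x_3 x_1,\ x_2 x_1$ are strictly below $x_2^2$ and pairwise distinct (the equality $x_1 x_2=x_2 x_1$ would, together with $x_2=x_3^{x_1}$, give $x_2=x_3$). As $|S^2\setminus T_2^2|=|S^2|-|T_2^2|=3$, those three products exhaust $S^2\setminus T_2^2$, so $x_1 x_3$ must lie in $T_2^2$. But the ordering eliminates every element of $T_2^2$ except $x_2^2$ as a possible value for $x_1 x_3$, forcing $x_1 x_3=x_2^2$; this extra coincidence in $T_1^2$ drops its cardinality to $6$, and Theorem~1.3 of \cite{FHLM} then forces $\langle T_1\rangle$ abelian, the final contradiction.
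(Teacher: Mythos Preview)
Your argument is essentially correct and follows a genuinely different route from the paper's. The paper never invokes Proposition~\ref{prop3zs}: after reducing to $|T_1^2|=7$ and $x_3x_4\neq x_4x_3$, it works entirely on the $T_1$ side, observing that $S^2=T_1^2\,\dot\cup\,\{x_3x_4,x_4x_3,x_4^2\}$ and then chasing where $x_4x_2$, $x_2x_4$ and $x_4x_1$ must land; a short centralizer computation ($x_1x_4^{-1}\in C_G(x_3)$ and $x_4x_1\in C_G(x_3)$, hence $x_1^2\in C_G(x_3)$) finishes it. Your approach is more symmetric and structural: you force $|T_1^2|=|T_2^2|=7$ and $x_4\in\langle T_1\rangle$, $x_1\in\langle T_2\rangle$, then feed both triples into Proposition~\ref{prop3zs} and Lemma~\ref{lemma2v} to whittle the possibilities down to the double conjugate-triple case, which you kill by a neat counting of $S^2\setminus T_2^2$. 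Both reach the same contradiction $|T_1^2|\le 6$ in spirit; the paper's is shorter and self-contained, while yours reuses the three-element classification and makes the symmetry between $T_1$ and $T_2$ explicit.

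There is one small omission you should patch. Proposition~\ref{prop3zs}(ii) only says $T_1=\{a,a^b,b\}$ with $[a,a^b]=1$; since the commuting pair in $T_1$ is $\{x_2,x_3\}$, you get $b=x_1$, but either $a=x_3$ (your case $x_2=x_3^{x_1}$, i.e.\ $x_1x_2=x_3x_1$) or $a=x_2$ (so $x_3=x_2^{x_1}$, i.e.\ $x_1x_3=x_2x_1$) is possible, depending on the sign of $x_1x_2-x_2x_1$. The same bifurcation occurs for $T_2$. Your treatment of the quick sub-cases and of the main double-conjugate case goes through verbatim in the mirror situation (just swap the roles of $x_1x_3$ and $x_3x_1$: in the alternative you get $x_3x_1=x_2^2$ instead of $x_1x_3=x_2^2$, which is again a third coincidence in $T_1^2$), but you should say so rather than fixing one orientation without comment.
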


\begin{proof} 
If $x_3x_4 = x_4x_3$, then the result follows from 
Lemma \ref{lemma2v}. So we may assume that 
$$
x_3x_4 \not = x_4x_3.
$$ 

Write $T = \{x_1, x_2, x_3\}$. If $|T^2| < 7$, then 
Theorem 1.3 of \cite{FHLM} implies that
$\{x_1, x_2, x_3\}$ is abelian, as required. Hence we may assume that
$|T^2| = 7$. Then 
$$
S^2 = T^2 \dot\cup \{x_3x_4, x_4x_3, x_4^2\}.
$$ 

We assume now that
$$
x_2x_4 \leq x_4x_2,
$$ 
the symmetric case being similar.

We first notice that
$$
x_4x_2 \notin T^2,
$$
since otherwise $x_4x_2 \in T^2$ and the only possibility is $x_4x_2 = x_3^2$. 
But then $x_3x_4=x_4x_3$, a contradiction. 
Hence 
$$ 
x_4x_2 = x_3x_4.
$$
and in particular $x_2x_4 \not= x_4x_2$. Moreover, $x_2x_4 \not= x_4x_3$ 
since $x_2x_4 < x_4x_2 < x_4x_3$. 
Hence $x_2x_4 \in T^2$ and $x_2x_4 \in \{x_3x_1, x_3x_2, x_3^2\}$. 

If $x_2x_4 = x_3x_2 = x_2x_3$, then $x_3 = x_4$, a contradiction. 

If $x_2x_4 = x_3^2$, then $x_2x_4=x_4x_2 $, a contradiction. 

Hence the only possibility is
\begin{equation}
\label{eq2811}
x_2x_4 = x_3x_1,
\end{equation}
which implies that
\begin{equation}
\label{eq2812}
x_1x_4 < x_2x_4 = x_3x_1 < x_4x_1.
\end{equation}
If $x_4x_1 \notin T^2$, then $x_4x_1 = x_3x_4 = x_4x_2$, a contradiction. Thus $x_4x_1 \in T^2$ and we have 
$$
x_4x_1 \in \{ x_1x_2, x_2^2, x_2x_3 = x_3x_2, x_1x_3, x_3^2\}.
$$ 
Because of \eqref{eq2812}, the only possibility is
\begin{equation}
\label{eq2813}
x_4x_1 = x_3^2.
\end{equation}
Now, from \eqref{eq2811} we get $x_1x_4^{-1} = x_3^{-1}x_2 \in C_G(x_3)$, and 
by \eqref{eq2813} also 
$x_4x_1 = x_3^2 \in C_G(x_3)$. Thus $x_1^2 \in C_G(x_3)$ and 
$x_1x_3 = x_3x_1=x_2x_4$, 
a contradiction.
\end{proof}

\begin{Lemma} 
\label{lem44young}
Let $G$ be an ordered group. Let $x_1, x_2, x_3,x_4$ be elements of $G$,
such that $x_1 < x_2 < x_3<x_4$ and
let $S = \{x_1, x_2, x_3,x_4\}$.
Suppose that $|S^2| = 10=3|S|-2$.

If $x_1x_2= x_2x_1$ and $x_3x_4 = x_4x_3$, then $\langle S \rangle$ is either abelian or young of type (i), (ii) or (iii).
\end{Lemma}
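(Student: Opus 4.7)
Assuming $\langle S \rangle$ is non-abelian (otherwise we are done), the plan is to reduce, step by step, to situations already handled by the three-element-set propositions of the previous two sections (\ref{prop3zs}, \ref{prop5}, \ref{prop3}, plus Lemma \ref{lem444}), and to use the precise count $|S^2|=10$ to extract a young presentation.

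First I would dispose of the case $x_2x_3 = x_3x_2$. Lemma \ref{lem444} then forces one of $\langle x_1, x_2, x_3\rangle$ or $\langle x_2, x_3, x_4\rangle$ to be abelian, and Proposition \ref{prop3} — applied with $T$ the abelian triple and $y$ the remaining element of $S$ — identifies $\langle S \rangle$ as either abelian or young of type (i) or (ii). So I may assume $x_2x_3 \neq x_3x_2$, and, reversing the order of $G$ if needed, that $x_2x_3 < x_3x_2$.

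Set $T_1 = \{x_1,x_2,x_3\}$ and $T_2 = \{x_2,x_3,x_4\}$. Lemma \ref{lemgen2-3} applied to $T_1$ (with $m=x_4$) and to $T_2$ (with $m=x_1$) shows that either $\langle S \rangle$ is already abelian $2$-generated, or $|T_i^2|\leq 7$ for $i=1,2$. If $|T_i^2|\leq 6$, then by Theorem 1.3 of \cite{FHLM} $\langle T_i \rangle$ is abelian and Proposition \ref{prop3} concludes; and if $|T_i^2|=7$ with $\langle T_i \rangle$ abelian, Proposition \ref{prop3} again applies. Thus the only remaining configuration is $|T_1^2|=|T_2^2|=7$ with both $\langle T_1 \rangle$ and $\langle T_2 \rangle$ non-abelian.

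In this \emph{core case} I apply Proposition \ref{prop3zs} to $T_1$ (justified by $x_1x_2 = x_2x_1$): since $x_2x_3 \neq x_3x_2$, alternative (i) of that proposition can only hold through $x_1 \in Z(\langle T_1\rangle)$, i.e.\ $x_1x_3 = x_3x_1$; otherwise alternative (ii) gives $T_1=\{a,a^b,b\}$ with $aa^b=a^ba$, necessarily with $b=x_3$ and $\{a,a^b\}=\{x_1,x_2\}$. The order-reversed symmetric application of Proposition \ref{prop3zs} to $T_2$ (using $x_3x_4=x_4x_3$) yields in the same way either $x_2x_4 = x_4x_2$, or $T_2=\{a',(a')^{b'},b'\}$ with $b'=x_2$ and $\{a',(a')^{b'}\}=\{x_3,x_4\}$.

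Four combinations now remain. In each I would write out $S^2$ explicitly using the known commutations and conjugations together with the ordering, and exploit the exact equation $|S^2|=10$ to pin down the remaining coincidences (principally those involving $x_1x_4$ vs.\ $x_4x_1$ and products of the form $x_ix_j$ with $i+j=5$). The two central subcases are expected to yield a nilpotent class-$2$ presentation (type (i)); the mixed subcases a direct-product presentation (type (ii)); and the subcase in which both $T_1$ and $T_2$ are of the conjugation form — where $x_3$ permutes $\{x_1,x_2\}$ while $x_2$ permutes $\{x_3,x_4\}$ — should, after choosing the right pair of generators among $\{x_2,x_3\}$ and one of $\{x_1,x_4\}$, produce the Baumslag–Solitar relation $a^b=a^2$ of type (iii). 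The main obstacle is precisely this last combination: identifying the generators producing the type (iii) presentation from the intertwined conjugation data on $T_1$ and $T_2$, while using $|S^2|=10$ to rule out a presentation outside the young family.
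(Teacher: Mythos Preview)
Your reduction to the ``core case'' ($|T_1^2|=|T_2^2|=7$, both non-abelian, $x_2x_3\neq x_3x_2$) is essentially the paper's, but from there the proposal stalls: the promised four-case analysis is not carried out, and your expectation for the hardest subcase is wrong. When both $T_1$ and $T_2$ are of the form $\{a,a^b,b\}$, the outcome is \emph{not} type (iii). Since $x_1x_2=x_2x_1$ forces $b=x_3$ and $\{a,a^b\}=\{x_1,x_2\}$, the relation $aa^b=a^ba$ gives $[a,b]=a^{-1}a^b\in C_G(a)\cap C_G(a^b)$, hence $[[x_2,x_3],x_2]=1$; symmetrically from $T_2$ one gets $[[x_3,x_2],x_3]=1$. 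Thus $\langle x_2,x_3\rangle$ is nilpotent of class at most $2$, and since $x_1\in\{a,a^b\}\subseteq\langle x_2,x_3\rangle$ and likewise $x_4\in\langle x_2,x_3\rangle$, one obtains $\langle S\rangle=\langle x_2,x_3\rangle$ young of type (i). No explicit enumeration of $S^2$ is needed, and type (iii) never appears in this lemma.

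The ``central'' subcases are also handled in the paper without writing out $S^2$. If $x_1\in Z(\langle T_1\rangle)$, the equality $|T_1^2|=7=|S^2|-3$ means that not all four of $x_1x_4,x_2x_4,x_3x_4,x_4^2$ can lie outside $T_1^2$; hence $x_4\in\langle T_1\rangle$, so $x_1x_4=x_4x_1$, the triple $\{x_1,x_3,x_4\}$ is abelian, and Proposition~\ref{prop3} finishes (giving type (i) or (ii)). The symmetric argument disposes of $x_4\in Z(\langle T_2\rangle)$. So the ``four combinations'' collapse: either a central element is present and a counting trick reduces to Proposition~\ref{prop3}, or neither is and the nilpotency argument above applies directly. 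Your plan to compute $S^2$ case by case and hunt for a Baumslag--Solitar relation would, if pursued, not lead to one.
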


\begin{proof}

Assume that $\langle S \rangle$ is non-abelian and let us prove that it 
is young of type (i), (ii) or (iii). Let $T=\{x_1, x_2, x_3\}$.

By Lemma \ref{lemgen2-3}, $|T^2| \leq 7$. If $\langle T\rangle$
is abelian, then by Proposition  \ref{prop3}  $\langle S \rangle$ 
is young of type (i) or (ii), as required. 
So assume  that  $\langle T\rangle$ is non-abelian. Then 
Theorem 1.3 of \cite{FHLM} implies that $|T^2|=7$ and Proposition \ref{prop3zs} applies. 

Suppose that $x_1 \in Z(\langle T\rangle)$. Since $|T^2|=7=|S^2|-3$,
it is impossible that $x_1x_4, x_2x_4, x_3x_4, x_4^2 \notin T^2$. Hence
$x_4 \in \langle T \rangle$, which implies that $x_1x_4=x_4x_1$. Thus
$ \langle x_1,x_3,x_4\rangle$ is abelian and by Proposition  \ref{prop3}  $\langle
S \rangle$
is young of type (i) or (ii), as required.

If $x_2$ or $x_3 \in Z(\langle x_1, x_2, x_3 \rangle)$, then $x_2x_3 = x_3x_2$ 
and the result follows by Lemma \ref{lem444}
and Proposition \ref{prop3}. 

So we may assume that $Z(\langle x_1, x_2, x_3 \rangle) \cap \{x_1, x_2, x_3 \} 
= \emptyset$.
Similarly, we may assume that  $\langle  x_2, x_3, x_4\rangle$ is
non-abelian, $|\{x_2, x_3, x_4\}^2|=7$ and 
$Z(\langle x_2, x_3, x_4 \rangle) \cap \{x_2, x_3, x_4 \} = \emptyset$. 
Hence Proposition \ref{prop3zs} implies that
$$
\{x_1, x_2, x_3 \} = \{a, a^b, b\}\quad \text{ and }\quad  \{x_2, x_3, x_4 \} 
= \{c, c^d, d \},
$$ 
for some $a,b,c,d \in G$ satisfying $aa^b = a^ba$, $cc^d = c^dc$. Therefore 
$$
[[a,b], a] = [[a^b, b], a^b] = 1\quad \text{and} \quad   
[[c, d], c] = [[c^d, d], c^d] = 1.
$$	
 Since $[x_1, x_3] \not= 1$ and $[x_1, x_2] = 1$, we have $x_3 = b$, 
$x_2 \in \{a, a^b\}$ and thus $[ [x_2, x_3], x_2] = 1$. 
Similarly, since $[x_2, x_4] \not = 1$ and $[x_3, x_4] = 1$, we have 
$x_2 = d$, $x_3 \in \{c, c^d\}$ and thus $[[x_3, x_2], x_3] = 1$. 
It follows that $\langle x_2, x_3 \rangle$ is nilpotent of class 2. 
Moreover, $a, b \in \langle x_2, x_3 \rangle$ and 
$c, d \in \langle x_2, x_3 \rangle$, so 
$\langle S \rangle = \langle x_2, x_3\rangle$ is 2-generated and nilpotent 
of class 2. Therefore $\langle S \rangle$ 
is young of type (i), as required.
\end{proof} 

\begin{Lemma} 
\label{lem6}
Let $G$ be an ordered group. Let $x_1, x_2, x_3,x_4$ be elements of $G$,
such that $x_1 < x_2 < x_3<x_4$ and
let $S = \{x_1, x_2, x_3,x_4\}$.
Suppose that $|S^2| = 10=3|S|-2$ and $\langle S \rangle$ is non-abelian.
 
If $x_1x_2 = x_2x_1$, then $\langle S \rangle$ is young.		
\end{Lemma}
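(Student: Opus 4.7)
My approach would be to split into cases according to the commutation pattern among $x_2, x_3, x_4$, reducing each case to one of the preceding lemmas. If $x_3x_4 = x_4x_3$, then with $x_1x_2 = x_2x_1$ also given, Lemma \ref{lem44young} applies directly and gives that $\langle S\rangle$ is young of type (i), (ii), or (iii). If $x_3x_4 \neq x_4x_3$ but $x_2x_3 = x_3x_2$, then Lemma \ref{lem444} forces one of $\langle x_1,x_2,x_3\rangle$, $\langle x_2,x_3,x_4\rangle$ to be abelian, and the non-commutation $x_3x_4 \neq x_4x_3$ excludes the second; Proposition \ref{prop3}, applied with $T = \{x_1,x_2,x_3\}$ and $y = x_4$, then yields (using $|S^2| = 3|S|-2$ and the non-abelian assumption) that $\langle S\rangle$ is young of type (i) or (ii).

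The remaining case, $x_3x_4 \neq x_4x_3$ and $x_2x_3 \neq x_3x_2$, is the main one. I would set $T' = \{x_2,x_3,x_4\}$ and apply Lemma \ref{lemgen2-3} with $m = \min S = x_1$: since $\langle S\rangle$ is non-abelian (hence not $2$-generated abelian), this gives $|T'^2| \leq |S^2| - 3 = 7$. Because $\langle T'\rangle$ is non-abelian, Theorem 1.3 of \cite{FHLM} forces $|T'^2| \geq 7$, so $|T'^2| = 7$. Both pairs $(x_2,x_3)$ and $(x_3,x_4)$ are non-commuting, hence Proposition \ref{prop5} applies and $\langle T'\rangle$ is young of type (iii) or (iv), with $T'$ in the explicit form described there.

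It then remains to prove $x_1 \in \langle T'\rangle$, as this will yield $\langle S\rangle = \langle T'\rangle$ is young. For this I would consider the set of products of $S$ involving $x_1$,
\[
P = \{x_1^2,\ x_1x_2 = x_2x_1,\ x_1x_3,\ x_3x_1,\ x_1x_4,\ x_4x_1\}.
\]
From $S^2 = T'^2 \cup P$ with $|S^2| = 10$ and $|T'^2| = 7$ one reads off $|P \setminus T'^2| = 3$. Strict monotonicity of the ordered group yields $x_1^2 < x_1x_2 < x_1x_3 < x_1x_4$, so these four elements are pairwise distinct, $|P| \geq 4$, and hence $|P \cap T'^2| \geq 1$. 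Moreover $x_1^2 < x_1x_2 < x_2^2$ are both below every element of $T'^2$, so they cannot contribute to $P\cap T'^2$; the coincidence must therefore be of the form $x_1x_j = x_ix_k$ or $x_jx_1 = x_ix_k$ with $j \in \{3,4\}$ and $i,k \in \{2,3,4\}$, which expresses $x_1$ as a word in $T'$. Thus $x_1 \in \langle T'\rangle$ and the conclusion follows.

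The main obstacle is the last case: reducing $T'$ to the explicit form given by Proposition \ref{prop5} and then combining the commutation $x_1x_2 = x_2x_1$ with the cardinality constraint $|S^2| = 10$ via the counting above to force $x_1$ into $\langle T'\rangle$.
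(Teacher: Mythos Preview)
Your proof is correct and follows the same architecture as the paper's: reduce via Lemma~\ref{lem44young} when $x_3x_4=x_4x_3$, via Lemma~\ref{lem444} and Proposition~\ref{prop3} when $x_2x_3=x_3x_2$, and in the remaining case apply Lemma~\ref{lemgen2-3} (with $m=\min S$) together with Theorem~1.3 of \cite{FHLM} to get $|\{x_2,x_3,x_4\}^2|=7$, then Proposition~\ref{prop5}, and finally a counting argument to force $x_1\in\langle x_2,x_3,x_4\rangle$. Your final counting via $P$ is just a slightly more explicit version of the paper's observation that $x_1^2\notin T'^2$ makes it impossible for $x_1x_2,x_1x_3,x_1x_4$ all to lie outside $T'^2$.
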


\begin{proof}
If $x_3x_4 = x_4x_3$, then the result follows from 
Lemma \ref{lem44young}. 
Thus we may suppose that $x_3x_4 \not = x_4x_3$.  

If $x_2x_3 = x_3x_2$, then  
the result follows by Lemma \ref{lem444} and Proposition \ref{prop3}.
Thus we may also assume that $x_2x_3 \not= x_3x_2$.
Moreover, Lemma \ref{lemgen2-3} and Theorem 1.3 in  \cite{FHLM} imply that 
$|\{x_2, x_3, x_4\}^2|=7$.  Therefore 
$\langle x_2, x_3, x_4 \rangle$ satisfies the hypotheses of 
Proposition \ref{prop5}
and hence it is young. 
 
Since $|\{x_2, x_3, x_4\}^2|=7$ and $x_1^2\notin \{x_2, x_3, x_4\}^2$,
it follows that
$$x_1x_2, x_1x_3, x_1x_4 \notin \{x_2, x_3, x_4\}^2$$
is impossible. Hence $x_1 \in \langle x_2, x_3, x_4 \rangle$ and 
$\langle S \rangle = \langle x_2, x_3, x_4 \rangle$ 
is young, as required.
\end{proof}


\section{The structure of $\langle S \rangle$ if $|S^2| = 3|S|-2$: 
proof of Theorem \ref{3k-2general}}
\label{sec4}

Now we can prove Theorem \ref{3k-2general}.

\begin{proof}
Write $S = \{x_1, x_2, \cdots, x_{k-1},  x_k\}$, $x_1 < x_2 < \cdots < x_k$, and define
$$
T = \{x_1, x_2, \cdots, x_{k-1}\},
$$
and
$$
V = \{x_2, \cdots, x_{k-1}, x_k \}.
$$ 
We argue by induction on $k$. 

We start with the basic case $k = 4$. By Lemma \ref{lemgen2-3}, 
either $\langle S \rangle$ is abelian, as required, or $|T^2| \leq 7$. 
Similarly, by considering the order opposite to $<$, we may suppose 
that $|V^2| \leq 7$. 

If either $T$ or $V$ is abelian, then, 
by Proposition \ref{prop3}, $\langle S \rangle$ is either abelian or young of 
type (i) or (ii), as required. So, from now on, we may assume that $T$ and $V$ 
are non-abelian and $|T|=|V|=7$ by Theorem 1.3 in \cite{FHLM}.

If $x_1x_2 = x_2x_1$, then by Lemma \ref{lem6}, $\langle S \rangle$ 
is a young group, as required. So  
we may suppose that $x_1x_2 \not= x_2x_1$ and, by Lemma \ref{lem444},
also $x_2x_3 \not= x_3x_2$. Thus $\langle T \rangle$ satisfies the hypotheses 
of Proposition \ref{prop5} and hence $\langle T \rangle$ is young. 
Moreover, one of the elements $x_1x_4, x_2x_4, x_3x_4 \in T^2$, so 
$\langle S \rangle = \langle T \rangle$ is also young.

Now we move to the inductive step. So suppose that $k > 4$ and 
that the result is true for $k-1$. 
Moreover, suppose that $\langle S \rangle$ is non-abelian. Then 
$$
|T^2| \leq 3k-2-3 = 3(k-1)-2
$$ 
by Lemma \ref{lemgen2-3}. We may assume that the equality holds,
since otherwise  $\langle T \rangle$ is abelian by 
Theorem 1.3 in \cite{FHLM} and the result
follows from Proposition \ref{prop3}. Hence,  by induction, $\langle T \rangle$ 
has the required structure. 

Now consider the $k$ elements $x_1x_k, x_2x_k, \dots, x_{k-1}x_k, x_k^2$. 
If one of them is in $T^2$, then $x_k \in \langle T \rangle$,  
hence $\langle S \rangle = \langle T \rangle$ has the required structure. 
On the other hand, if $x_1x_k, x_2x_k, \cdots, x_k^2 \notin T^2$, 
then $S^2 \supseteq T^2 \dot\cup \{x_1x_k, \cdots, x_k^2\}$ and, in view of $k>4$, 
$$
|T^2| \leq  |S^2| -k  = 2k-2 \leq 3(k-1)-3 = 3 |T|-3.
$$ 
Therefore $T$ is abelian by Theorem 1.3 in \cite{FHLM} and the result 
follows from Proposition \ref{prop3}.
\end{proof}


\section{On the structure of $\langle S \rangle$ if $|S^2| \leq 3|S|-2+s$ and 
$|S|$ is large}
\label{sec5}

We start with the following lemma.

\begin{Lemma} 
\label{lem7}
Let $G$ be an ordered group. Suppose that $a,b,c \in G$ such that $[a,c] = 1$
and let  $T$ be a subset of $G$ satisfying 
$T \subseteq \{a, ac, ac^2, \cdots, ac^h\}$ for some positive 
integer $h$. Moreover, let $b \in G \setminus T$  
such that $|Tb \cap bT| \geq 2$. Then $\langle a, b, c \rangle$ is metabelian. 
Moreover, if $G$ is nilpotent,
then $\langle a, b, c \rangle$ is nilpotent of class at most $2$ .
\end{Lemma}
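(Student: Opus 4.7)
The plan is to translate the hypothesis $|Tb\cap bT|\geq 2$ into a short list of algebraic relations and then to propagate commutativity using the unique-roots property of ordered groups. By hypothesis there exist two distinct pairs $(t_i,t_i')$ with $t_ib=bt_i'$, where $t_i=ac^{\alpha_i}$ and $t_i'=ac^{\beta_i}$. Using $[a,c]=1$, these rewrite as $a^b(c^b)^{\alpha_i}=ac^{\beta_i}$ for $i=1,2$; dividing one by the other yields
$$
(c^b)^n=c^m,\qquad a^b=a\,c^{\beta_1}(c^b)^{-\alpha_1},
$$
with $n=\alpha_1-\alpha_2$ and $m=\beta_1-\beta_2$ both nonzero.

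The metabelian conclusion rests on the fact that in an ordered group $x^k=y^k$ with $k\neq 0$ forces $x=y$, equivalently $[u,v^k]=1$ forces $[u,v]=1$. First, $c^b$ commutes with $(c^b)^n=c^m$, hence with $c$, so $[c,c^b]=1$. Conjugating $(c^b)^n=c^m$ by $b^j$ yields $(c^{b^{j+1}})^n=(c^{b^j})^m$, and an induction on $|i|$ using this property gives $[c,c^{b^i}]=1$ for every $i\in\mathbb{Z}$, so $C:=\langle c^{b^i}:i\in\mathbb{Z}\rangle$ is abelian. The same kind of induction, starting from $[a,c]=1$ and noting that $a(c^{b^i})^ma^{-1}=(c^{b^i})^m=(c^{b^{i+1}})^n$ forces $[a,c^{b^{i+1}}]=1$ by unique roots, yields $[a,c^{b^i}]=1$ for all $i$, whence $N:=\langle a\rangle\cdot C$ is abelian. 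Since $a^b\in aC\subseteq N$, $b$-conjugation stabilises $N$; together with $\langle a,c\rangle\subseteq N$ this makes $N$ normal in $\langle a,b,c\rangle$, and the quotient $\langle a,b,c\rangle/N$ is cyclic (generated by the image of $b$). Hence $\langle a,b,c\rangle$ is metabelian.

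For the nilpotent statement, assume $G$ (and so $B:=\langle a,b,c\rangle$) is nilpotent and pass to the Mal'cev completion $B^{\mathbb{Q}}$. Unique divisibility converts $(c^b)^n=c^m$ into $c^b=c^{m/n}$, i.e.\ $\mathrm{Ad}(b)\log c=(m/n)\log c$. The Lie algebra of $B^{\mathbb{Q}}$ being nilpotent, $\mathrm{ad}(\log b)$ is a nilpotent operator, so $\mathrm{Ad}(b)=\exp(\mathrm{ad}(\log b))$ is unipotent and its only eigenvalue is $1$. Either $c=1$, in which case the second relation reduces to $a^b=a$ and $B$ is abelian, or $m=n$, in which case unique roots in $B$ already give $c^b=c$, and the second relation becomes $[a,b]=c^{\beta_1-\alpha_1}$. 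Then $c$ commutes with both $a$ and $b$, so it is central in $B$ and $[a,b]\in Z(B)$; consequently $B$ has nilpotency class at most $2$.

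The hard part will be the systematic use of unique roots to cascade a single ``fractional'' relation $(c^b)^n=c^m$ into full commutativity of $N$; after this, the metabelian statement drops out. The only additional input for the nilpotent case is the unipotence of conjugation on the Mal'cev completion, which is what pins down $m=n$ (or $c=1$) and forces the class to be at most $2$.
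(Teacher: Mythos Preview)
Your metabelian argument is correct and essentially the same as the paper's: both extract from $|Tb\cap bT|\geq 2$ a relation of the form $(c^b)^n=c^m$ with $m,n\neq 0$, use unique roots in ordered groups to show that $C=\langle c^{b^i}:i\in\mathbb{Z}\rangle$ is abelian and centralised by $a$, and conclude by observing that the quotient of $\langle a,b,c\rangle$ by an abelian normal subgroup containing $C$ and $[a,b]$ is abelian. The only cosmetic difference is that the paper works with $C$ itself (noting $[a,b]\in C$ so $\langle a,b,c\rangle/C$ is abelian), whereas you enlarge to $N=\langle a\rangle C$ to get a cyclic quotient; and the paper makes the induction explicit by proving $(c^{m^i})^{b^i}=c^{n^i}$, while you conjugate the basic relation and appeal to unique roots step by step. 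These are equivalent.

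Your nilpotent argument, however, takes a genuinely different route. The paper argues elementarily by induction on the nilpotency class: in $G/Z(G)$ (torsion-free nilpotent of smaller class) the induction hypothesis gives $c^b\equiv c\pmod{Z(G)}$, whence $c^b=cz$ with $z$ central; then $(c^m)^b=c^n$ forces $c^{n-m}\in Z(G)$, so either $m=n$ or $c\in Z(G)$, and in both cases $c^b=c$. You instead invoke the Mal'cev completion $B^{\mathbb{Q}}$ and read $(c^b)^n=c^m$ as an eigenvalue equation $\mathrm{Ad}(b)\log c=(m/n)\log c$; nilpotency of the Lie algebra makes $\mathrm{Ad}(b)$ unipotent, forcing $m/n=1$ or $c=1$. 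Both approaches are valid and yield the same conclusion $c^b=c$. The paper's argument is more self-contained (using only that quotients and centres of torsion-free nilpotent groups behave well with respect to roots), while yours is conceptually cleaner but imports the existence of the rational completion and the $\exp/\log$ correspondence.
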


\begin{proof}
Since $|Tb \cap bT| \geq 2$, there exist $r \not= l$, $s, t \in \mathbb{Z}$ such that $(ac^r)^b = ac^s$, $(ac^l)^b = ac^t$. 
Suppose , without loss of generality, that  $r < l$. Then 
$$
(ac^rc^{l-r})^b = ac^s(c^{l-r})^b= ac^t,
$$ 
which implies that $(c^{l-r})^b = c^{t-s}$. Write $l-r = m$, $t-s = n$. 
Then $(c^m)^b = c^n$, so $[(c^m)^b, c] = 1$ and $[c^b, c] = 1$. 

Now we claim that the subgroup $C = \langle c^{b^j} \ | \ j \in \mathbb{Z} \rangle $ 
is abelian. 

Obviously it suffices to prove that $[c, c^{b^j}] = 1$ for any integer $j$. 
From $(c^m)^b = c^n$ we get easily by induction that $(c^{m^i})^{b^i} = c^{n^i}$ 
for any positive integer $i$. 

Indeed, suppose that $(c^{m^{i-1}})^{b^{i-1}} = c^{n^{i-1}}$. Then we have 
$$
(c^{m^i})^{b^i} = (((c^{m^{i-1}})^{b^{i-1}})^m)^b = ((c^{n^{i-1}})^m)^b 
= ((c^{m})^b)^{n^{i-1}} = c^{n^i}.
$$ 
Therefore $[(c^{b^i})^{m^i}, c] = 1$ for each positive integer $i$,  
and hence $[c^{b^i}, c] = 1$. 
This result  also implies that 
$[c, c^{b^{-i}}] = 1$ for each positive integer $i$. 
Thus $[c, c^{b^j}] = 1$ for every integer $j$ and the claim follows. 

Obviously $b \in N_G(C)$. We claim that also $a \in C_G(C)$. We need only to show
that if $v$ is an integer, then $a^{b^v}\in C_G(c)$. 

We show first that 
$a^b\in  C_G(c)$. Indeed, since $a\in  C_G(c)$ and $(ac^r)^b = ac^s$,
it follows that 
$$a^b(c^b)^r= ac^s=(ac^s)^c=(a^b(c^b)^r)^c=(a^b)^c(c^b)^r,$$
which implies that $(a^b)^c=a^b$, as required. 

Suppose, by induction, that 
$a^{b^v} \in C_G(c)$ for some positive integer $v$.
Since $(ac^r)^{b^{v+1}}=((ac^r)^b)^{b^v}=(ac^s)^{b^v}$, we have
$$a^{b^{v+1}}(c^r)^{b^{v+1}}=(ac^s)^{b^v}=((ac^s)^{b^v})^c
=(a^{b^{v+1}}(c^r)^{b^{v+1}})^c=(a^{b^{v+1}})^c(c^r)^{b^{v+1}}.$$
Hence also $a^{b^{v+1}} \in C_G(c)$. It follows that 
$a^{b^v} \in C_G(c)$ for each positive integer $v$.

Similarly, from $(ac^s)^{b^{-1}} = ac^r$, 
we get that $a^{b^{-1}} \in C_G(c)$ and 
by induction $a^{b^{-v}} \in C_G(c)$ for each positive integer $v$.
 
Hence $a \in C_G(C) \subseteq N_G(C)$. Thus $C$ is normal 
in $\langle a, b, c \rangle$, 
and obviously $\langle a, b, c \rangle /C$ is abelian. Hence 
$\langle a, b, c \rangle$ is metabelian.

If  $G$ is a torsion-free 
nilpotent group, then  $(c^m)^b = c^n$ implies
that $c^b = c$. 
In fact, we can argue by induction on the nilpotency class $d$ of $G$. 
The result is obvious if $d = 1$, 
that is if $G$ is abelian. By induction we have $c^bZ(G) = cZ(G)$, 
since $G/Z(G)$ is torsion-free of class $d-1$ 
(see for example 5.2.19 of \cite{Robinson}). Thus $c^b = cz$
for some $z \in Z(G)$ 
and we have $c^n = (c^b)^m = (cz)^m = c^mz^m$, which implies that
$c^{n-m} \in Z(G)$. If $n-m \not = 0$, then $c \in Z(G)$, and obviously $c^b = c$ 
in this case. If $m = n$, 
then $[c^m , b] = 1$ and $[c,b] = 1$, since $G$ is an ordered group.
Hence $C\subseteq Z(\langle a, b, c \rangle)$ and 
$\langle a, b, c \rangle$ is nilpotent of class at most $2$ , as required.
\end{proof}

Lemma \ref{lem7} has the following useful Corollary.

\begin{cor} 
\label{cororo}
Let $G$ be an ordered group and let $S$ be a finite subset of $G$ of finite size $> 3$. 
Let $m= \max S$ and $T= S \setminus \{m \}$.  
Suppose that $|S^2| = 3|S|+b$ for some $b \leq |S|-6$.
If $m \notin \langle T \rangle$ and $m \notin C_G(T)$, then 
$\langle S \rangle$ is metabelian and if $G$ is nilpotent, 
then $\langle S \rangle$ is nilpotent of class at most $2$.
\end{cor}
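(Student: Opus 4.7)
The plan is to use the hypotheses $m\notin\langle T\rangle$ and $m\notin C_G(T)$ to squeeze $|T^2|$ between bounds that force $T$ into the rigid geometric-progression structure given by the $(3|T|-4)$-theorem of \cite{FHLM}, and then apply Lemma \ref{lem7} directly. Throughout, set $k=|S|$, so that $|T|=k-1\geq 3$.

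First, because $m\notin\langle T\rangle$, no element of $(mT\cup Tm)\cup\{m^2\}$ can lie in $T^2$ (any such coincidence would force $m\in\langle T\rangle$), and $m^2\notin mT\cup Tm$ since $m\notin T$. Thus
$$
S^2\;=\;T^2\;\dot\cup\;(mT\cup Tm)\;\dot\cup\;\{m^2\}.
$$
Next, $m\notin C_G(T)$ together with Corollary 1.4 of \cite{FHLM} yields $|mT\cup Tm|\geq|T|+1$, which combined with $|S^2|=3k+b$ gives
$$
|T^2|\;\leq\; 2|T|+b+1\;\leq\; 3|T|-4,
$$
where I used $b\leq|S|-6=|T|-5$. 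The $(3|T|-4)$-theorem of \cite{FHLM} recalled in the introduction of the present paper then applies: $\langle T\rangle$ is abelian and there exist $a,c\in G$ with $[a,c]=1$ and $h\geq 0$ such that $T\subseteq\{a,ac,ac^2,\dots,ac^h\}$.

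To invoke Lemma \ref{lem7} I still need $|Tm\cap mT|\geq 2$. Plugging the universal bound $|T^2|\geq 2|T|-1$ into the identity $|S^2|=|T^2|+|mT\cup Tm|+1$ yields
$$
|mT\cup Tm|\;\leq\; 3k+b-(2|T|-1)-1\;=\;|T|+b+3,
$$
hence
$$
|mT\cap Tm|\;=\;2|T|-|mT\cup Tm|\;\geq\;|T|-b-3\;\geq\;2,
$$
again from $b\leq|T|-5$. Lemma \ref{lem7} is then applicable (with its ``$b$'' played by our $m$), and concludes that $\langle a,m,c\rangle$ is metabelian, and nilpotent of class at most $2$ when $G$ is nilpotent. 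Since $T\subseteq\langle a,c\rangle$, we have $\langle S\rangle=\langle T,m\rangle\subseteq\langle a,m,c\rangle$, and the conclusion descends to $\langle S\rangle$. The delicate point is really the two-sided numerical bookkeeping: the single hypothesis $b\leq|S|-6$ has to yield simultaneously both $|T^2|\leq 3|T|-4$ (controlling the structure of $T$) and $|mT\cap Tm|\geq 2$ (activating Lemma \ref{lem7}); once these inequalities are in place, the lemma does the rest.
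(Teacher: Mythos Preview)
Your proof is correct and follows essentially the same route as the paper's: the disjoint decomposition of $S^2$, the lower bound $|mT\cup Tm|\geq |T|+1$ forcing $|T^2|\leq 3|T|-4$ and hence the geometric-progression structure for $T$, then the lower bound $|T^2|\geq 2|T|-1$ forcing $|mT\cap Tm|\geq 2$, and finally Lemma~\ref{lem7}. One minor citation slip: the bound $|mT\cup Tm|\geq |T|+1$ for $m\notin C_G(T)$ is Proposition~2.4 of \cite{FHLM}, not Corollary~1.4 (the latter is precisely the $(3|T|-4)$ structure theorem you invoke in the next step).
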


\begin{proof} 
Set $|S|=k$ and $m=x_k$. Since $x_k\notin \langle T \rangle$ and $x_k=\max S$, 
we have the partition
$$
S^2 = T^2 \dot\cup (x_kT \cup Tx_k) \dot\cup \{x_k^2\}.
$$
Moreover $|x_kT \cup Tx_k| \geq k$ by Proposition 2.4 of \cite {FHLM}. 
Hence 
$$
3k+b = |T^2|+1+|x_kT \cup Tx_k| \geq |T^2|+1+k,
$$ 
and 
$$
|T^2| \leq 2k+b-1 \leq 2k+k-6-1 = 3(k-1)-4.
$$ 
Thus, by Corollary 1.4 of \cite{FHLM}, there exist elements $a, c$ in $G$, 
with $ac = ca$, 
such that $T \subseteq \{a, ac, \cdots, ac^h\}$ for some positive integer $h$. 

Furthermore, $|T^2|\geq 2|T|-1=2k-3$ by Theorem 1.1 in  \cite{FHLM}. Therefore
$$|x_kT \cup Tx_k|=|S^2|-|T^2|-1\leq (4k-6)-(2k-3)-1=2k-4$$
and
$$|x_kT \cap Tx_k|=2|T|-|x_kT \cup Tx_k|\geq 2(k-1)-(2k-4)=2.$$ 
Thus Lemma \ref{lem7} applies. 
Hence $\langle S \rangle \subseteq \langle a, c, x_k \rangle$ is metabelian and
and if $G$ is nilpotent,
then $\langle S \rangle$ is nilpotent of class at most $2$, as required.
\end{proof}

Finally, in order to prove Theorem \ref{3k-2+s}, we shall use the folllowing 
easy consequence of Lemma \ref{lemgen2-3}.

\begin{Lemma} 
\label{lem88}
Let $G$ be an ordered group and let $x_1 < \cdots < x_k$ be elements of $G$. 
Let $S = \{x_1, \cdots, x_k\}$, with $k\geq2$ and suppose that $|S^2| \leq 3|S|+b$
for some integer $b$. 
Then for any integer $i$, $1\leq i < k$, either $\langle x_1, \cdots, x_{i+1} \rangle$ 
is abelian, or $|\{x_1, \cdots, x_i\}^2| \leq 3i+b$.
\end{Lemma}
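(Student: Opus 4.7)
The plan is to iterate Lemma~\ref{lemgen2-3} downward along the nested chain $S = U_k \supset U_{k-1} \supset \cdots \supset U_{i+1} \supset U_i$, where I set $U_j := \{x_1, \ldots, x_j\}$ for $1 \leq j \leq k$. At each stage $j$, running from $k$ down to $i+1$, I apply Lemma~\ref{lemgen2-3} to the set $U_j$ with the choice $m = x_j = \max U_j$; then $T = U_j \setminus \{m\}$ is precisely $U_{j-1}$, and $|U_j| = j \geq 2$, so the lemma is applicable.

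This produces a dichotomy at each stage: either $\langle U_j \rangle$ is a $2$-generated abelian group, in which case $\langle U_{i+1}\rangle \leq \langle U_j\rangle$ is also abelian (because $U_{i+1} \subseteq U_j$) and the first alternative of Lemma~\ref{lem88} holds, or else $|U_{j-1}^2| \leq |U_j^2| - 3$. If the abelian alternative ever triggers, the proof ends immediately. Otherwise, telescoping the inequalities $|U_{j-1}^2| \leq |U_j^2| - 3$ from $j=k$ down to $j=i+1$, and combining with the starting bound $|U_k^2| = |S^2| \leq 3k+b$ given by hypothesis, yields
$$
|U_i^2| \;\leq\; |U_k^2| - 3(k-i) \;\leq\; 3k+b - 3(k-i) \;=\; 3i + b,
$$
which is exactly the second alternative.

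There is essentially no obstacle beyond a clean bookkeeping of the telescoping. The only point to verify carefully is that Lemma~\ref{lemgen2-3} is applicable at each stage, which amounts to $|U_j| \geq 2$, a condition automatically met since $j \geq i+1 \geq 2$. The result is thus a straightforward consequence of Lemma~\ref{lemgen2-3}, obtained by iterating it along the obvious filtration of $S$ by the ordering.
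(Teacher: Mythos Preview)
Your proof is correct and follows essentially the same approach as the paper: both arguments iterate Lemma~\ref{lemgen2-3} downward along the chain $U_k \supset U_{k-1} \supset \cdots \supset U_{i+1}$, with the paper phrasing this as a formal downward induction and you as a direct telescoping. The only cosmetic difference is that the paper first disposes of the case $b \leq -3$ (where $\langle S\rangle$ is abelian by \cite{FHLM}), but this is not actually needed for the argument to go through.
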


\begin{proof}
Notice that if $b\leq -3$, then $\langle S \rangle$ is abelian by Theorem 1.3
in \cite{FHLM}. Thefore we may assume that $b\geq -2$.

Our proof is by induction from $i=k-1$ down to $k=1$. If $i=k-1$, then
$\langle x_1, \cdots, x_{i+1} \rangle= \langle S \rangle$. Hence, by 
Lemma \ref{lemgen2-3}, either $ \langle S \rangle$ is abelian, or
$$|\{x_1, \cdots, x_i\}^2|\leq |S^2|-3\leq 3k-3+b=3(k-1)+b,$$
as required.

So assume that $1\leq j<k-1$ and that the Lemma holds for $i=j+1$. Moreover,
suppose that $\langle x_1, \cdots, x_{j+1} \rangle$ is non-abelian. Then
$\langle x_1, \cdots, x_{j+2} \rangle$ is nonabelian and by our assumptions
$|\{x_1, \cdots, x_{j+1}\}^2|\leq 3(j+1)+b$. 
It follows then by  Lemma \ref{lemgen2-3} that 
$$|\{x_1, \cdots, x_j\}^2\leq |\{x_1, \cdots, x_{j+1}\}^2|-3\leq 3j+b,$$
as required.
Therefore the Lemma holds for $1\leq i<k$, as required.
\end{proof}

We can now prove Theorem \ref{3k-2+s}.

\begin{proof} [Proof of Theorem \ref{3k-2+s}]
Write $S = \{x_1, \cdots, x_k\}$,
with $x_1 < \cdots < x_k$, $T = \{x_1, \cdots, x_t\}$, $V = 
\{x_{t+1}, \cdots, x_k\}$, with $|T| = |V| = \frac k2$
if $k$ is even, and $|T| = \frac{k-1}2$, $|V| = \frac{k+1}2$, if $k$ is odd.
Then  $t = |T| \geq 2^{s+1}$ and $v = |V|\geq 2^{s+1}$.

We claim that either $\langle T \rangle$ or $\langle V \rangle$ is
metabelian
(and nilpotent of class at most 2 if $G$ is nilpotent).
We shall prove this claim by induction on $s$.

The claim is obvious if either $\langle T \rangle$ or $\langle V \rangle$ is
abelian. So suppose that both are non-abelian.
Then, by Lemma \ref{lem88},
$|T^2| \leq 3t-2+s$ and also $|V^2| \leq 3v-2+s$, by considering the
ordering
opposite to $<$.

Moreover, because of the ordering,
$T^2 \cap V^2 = \emptyset$ and
$x_tx_{t+1}, x_{t+1}x_t \notin T^2 \cup V^2$.
If $x_tx_{t+1} = x_{t+1}x_t$, then by Lemma \ref{lemma1}
there exists $x_j < x_t$ such that either
$x_jx_{t+1}$ or $x_{t+1}x_j$ is not in $T^2$,
since $\langle T \rangle$ is not abelian. Obviously also
$x_jx_{t+1}, x_{t+1}x_j \notin V^2$.
Therefore, in any case $S^2 \setminus (T^2 \cup V^2) \geq 2$.

Now, if
$$
|T^2| \geq 3t-2+\frac{s+1}2
$$
and
$$
|V^2| \geq 3v-2+\frac{s+1}2,
$$
then
$$
|S^2|\geq 3k-4+s+1+2 = 3k+s-1,
$$
a contradiction.

If $s=1$, then this contradiction implies
that either $|T^2|\leq 3t-2$ or
$|V^2|\leq 3v-2$ and by Corollaries \ref{cor2} and \ref{cor1}
either
$\langle T \rangle$ or $\langle V \rangle$ is metabelian
(and nilpotent of class at most 2 if $G$ is nilpotent), as claimed.

So suppose that $s\geq 2$, and that our claim holds for all smaller
values of $s$. By the above contradiction, we must have either 
$$
|T^2| \leq 3t-2+\frac s2
$$
or
$$
|V^2| \leq 3v-2+\frac s2.
$$
Moreover
$$
t \geq 2^{s+1} \geq 2^{\frac s2+2} 
$$
and similarly $v \geq  2^{\frac s2 +2}$, since $s\geq 2$. Hence, by
induction,
either $\langle T \rangle$ or $\langle V \rangle$ is metabelian
(and nilpotent of class at most 2 if $G$ is nilpotent), as claimed.
The proof of the claim is complete.

Suppose,
without loss of generality, that $\langle T \rangle$ is metabelian 
(and nilpotent of class at most 2, if $G$ is nilpotent). Let 
$X = \{x_1, \cdots, x_j\}\supseteq T$ be a subset of $S$ maximal under
the condition that $\langle X \rangle$ is metabelian  (and nilpotent of
class at most 2, if $G$ is nilpotent). If $X = S$, then we have the result.

So suppose that $X$  is a proper subset of $S$.
Under this assumption we shall reach a contradiction, thus
concluding the proof of the Theorem. Because of the maximality of $X$,
$x_{j+1} \notin \langle X \rangle$ and $x_{j+1} \notin C_G(X)$.
Hence, if we write $W = \{x_1, \cdots, x_{j+1}\}$ and $w = |W|$, then
$|W^2|\leq 3w-2+s$, by Lemma \ref{lem88}. Moreover $s-2\leq w-6$,
since $w\geq t+1\geq 2^{s+1}+1\geq s+4$.
Therefore Corollary \ref{cororo} applies, and $\langle W \rangle$ is
metabelian
(and nilpotent of class at most 2, if $G$ is nilpotent), a contradiction.
\end{proof}

Finally we can prove our final statement. 

\begin{proof}[Proof of Theorem \ref{construction}]
Let $G = \langle a \rangle \times \langle b, c \rangle$, where $\langle a \rangle$ 
is an infinite cyclic group and $\langle b,c \rangle$ is a free group of rank 2. 
Then $G$ is a direct product of two orderable groups and therefore it is an orderable group. 
Let $k$ be an integer $\geq 3$ and define $S = \{a, ac, \cdots, ac^{k-2}, b\}$. 
Write $T = \{a, ac, \cdots, ac^{k-2}\}$. Then $b \notin C_G(T)$, 
so in particular $b \notin \langle T \rangle$. Hence $S^2 = T^2 \dot\cup (bT \cup Tb)
\dot\cup \{b^2\}$. 
We also have $ab = ba$ and $bac^i = abc^i \not= ac^jb$ for any $i \not= j$, 
since $\langle b, c \rangle$ is free.  Hence $|bT \cap Tb| = 1$, which
implies that 
$$
|bT \cup Tb| = k-1+k-1-1 = 2k-3.
$$ 
Since $S^2 = T^2 \dot\cup (bT \cup Tb) \dot\cup \{b^2\}$, it follows that 
$$
|S^2| = 2(k-1)-1+2k-3+1 = 4k-5.
$$ 
Obviously $\langle S \rangle = \langle a,b,c\rangle=G$. In particular,
$\langle S \rangle$ is not soluble.
\end{proof}

\section*{Acknowledgements}

The authors wish to thank Professor Eamon O'Brien (Auckland, New-Zea\-land), 
for his precious suggestions 
concerning the statement of Theorem \ref{3k-2general}.

This work was supported by the ``National Group for Algebraic and Geometric 
Structures, and their Applications" (GNSAGA - INDAM), Italy.

A.P. is supported by the ANR grant Caesar number 12 - BS01 - 0011. He thanks 
his colleagues from the University of Salerno for their hospitality.

\end{document}